\def\MRbibitem{\@ifnextchar[\my@lbibitem\my@bibitem}
\def\mybiblabel#1#2{\@biblabel{{\hyperref{http://www.ams.org/mathscinet-getitem?mr=#1}{}{}{#2}}}}
\def\myhyperanchor#1{\Hy@raisedlink{\hyper@anchorstart{cite.#1}\hyper@anchorend}}
\def\my@lbibitem[#1]#2#3#4\par{%
  \item[\mybiblabel{#2}{#1}\myhyperanchor{#3}\hfill]#4%
  \@ifundefined{ifbackrefparscan}{}{\BR@backref{#3}}%
  \if@filesw{\let\protect\noexpand\immediate% write to aux-file
    \write\@auxout{\string\bibcite{#3}{#1}}}\fi\ignorespaces%
}
\def\my@bibitem#1#2#3\par{%
  \refstepcounter\@listctr% standard tex item counter for the generic item number
  \item[\mybiblabel{#1}{\the\value\@listctr}\myhyperanchor{#2}\hfill]#3%
  \@ifundefined{ifbackrefparscan}{}{\BR@backref{#2}}%
  \if@filesw\immediate\write\@auxout% write to aux-file
    {\string\bibcite{#2}{\the\value\@listctr}}\fi\ignorespaces%
}
\DeclareFontFamily{U} {MnSymbolA}{}
\DeclareFontShape{U}{MnSymbolA}{m}{n}{
   <-6> MnSymbolA5
   <6-7> MnSymbolA6
   <7-8> MnSymbolA7
   <8-9> MnSymbolA8
   <9-10> MnSymbolA9
   <10-12> MnSymbolA10
   <12-> MnSymbolA12}{}
\DeclareFontShape{U}{MnSymbolA}{b}{n}{
   <-6> MnSymbolA-Bold5
   <6-7> MnSymbolA-Bold6
   <7-8> MnSymbolA-Bold7
   <8-9> MnSymbolA-Bold8
   <9-10> MnSymbolA-Bold9
   <10-12> MnSymbolA-Bold10
   <12-> MnSymbolA-Bold12}{}
\DeclareSymbolFont{MnSyA} {U} {MnSymbolA}{m}{n}
 \DeclareFontFamily{U} {MnSymbolC}{}
\DeclareFontShape{U}{MnSymbolC}{m}{n}{
  <-6> MnSymbolC5
  <6-7> MnSymbolC6
  <7-8> MnSymbolC7
  <8-9> MnSymbolC8
  <9-10> MnSymbolC9
  <10-12> MnSymbolC10
  <12-> MnSymbolC12}{}
\DeclareFontShape{U}{MnSymbolC}{b}{n}{
  <-6> MnSymbolC-Bold5
  <6-7> MnSymbolC-Bold6
  <7-8> MnSymbolC-Bold7
  <8-9> MnSymbolC-Bold8
  <9-10> MnSymbolC-Bold9
  <10-12> MnSymbolC-Bold10
  <12-> MnSymbolC-Bold12}{}
\DeclareSymbolFont{MnSyC} {U} {MnSymbolC}{m}{n}
\DeclareMathSymbol{\top}{\mathord}{MnSyA}{219} % smaller symbol for transpose
\DeclareMathSymbol{\plus}{\mathord}{MnSyC}{20} % a smaller plus sign
\declaretheorem[numberwithin=section]{theorem}
\declaretheorem[sibling=theorem]{lemma}
\declaretheorem[sibling=theorem]{corollary}
\declaretheorem[sibling=theorem,style=definition]{definition}
\declaretheorem[sibling=theorem,style=remark]{example}
\declaretheorem[sibling=theorem,style=remark]{remark}
\newtheorem{claim}{Claim}
\numberwithin{equation}{section}     % Makes labeled equations easier to find.
\setlist[enumerate,1]{label={\upshape(\alph*)},ref=\alph*}
\setlist[enumerate,2]{label={\upshape(\arabic*)},ref=\arabic*}
\newcommand{\R}{\mathbb{R}}
\newcommand{\Z}{\mathbb{Z}}
\newcommand{\N}{\mathbb{N}}
\newcommand{\E}{\mathbb{E}}
\def\eps{\varepsilon}
\def\phi{\varphi}
\def\R{{\mathbb R}}
\def\N{{\mathbb N}}
\def\Z{{\mathbb Z}}
\def\le{\leqslant}
\def\ge{\geqslant}
\def\m{\mathbb{M}}
\def\1{ \mathbbm{1}}
\newcommand{\vertiii}[1]{{\left\vert\kern-0.25ex\left\vert\kern-0.25ex\left\vert #1 
    \right\vert\kern-0.25ex\right\vert\kern-0.25ex\right\vert}}
\newcommand{\invertiii}[1]{{\vert\kern-0.25ex\vert\kern-0.25ex\vert #1 
    \vert\kern-0.25ex\vert\kern-0.25ex\vert}}
\begin{document}

\title{Almost sure orbits closeness}
\date{\today}

\author[M. Kirsebom]{Maxim Kirsebom}
\address{Maxim Kirsebom, University of Bremen, Department 3 – Mathematics, Institute for Dynamical Systems, Bibliothekstr. 5, 28359 Bremen, Germany}
\email{kirsebom@uni-bremen.de}
%\urladdr{https://www.math.uni-bremen.de/stochdyn/kirsebom/}

\author[P. Kunde]{Philipp Kunde}
\address{Philipp Kunde, Department of
  Mathematics, Oregon State University, Kidder Hall, Corvallis, OR 97331, USA} 
\email{kundep@oregonstate.edu}
\urladdr{https://sites.google.com/view/pkunde}

\author[T. Persson]{Tomas Persson}
\address{Tomas Persson, Centre for Mathematical Sciences, Lund University, Box 118, 221 00 Lund, Sweden} 
\email{tomasp@gmx.com}
\urladdr{https://tomaspersson.ouvaton.org}

\author[M. Todd]{Mike Todd}
\address{Mike Todd\\ Mathematical Institute\\
University of St Andrews\\
North Haugh\\
St Andrews\\
KY16 9SS\\
Scotland} \email{m.todd@st-andrews.ac.uk}
\urladdr{https://mtoddm.github.io/}

\thanks{We acknowledge financial support by the Hamburg--Lund Funding Program 2023--2025 which made several mutual research visits possible.}

\subjclass[2020]{37B20, 37A05, 37D05, 37E05}

%37B20 - Notions of recurrence and recurrent behavior in topological dynamical systems
%37A05 - Dynamical aspects of measure-preserving transformations
%37D05 - Dynamical systems with hyperbolic orbits and sets
%37E05 - Dynamical systems involving maps of the interval

\begin{abstract}
  We consider the minimal distance between orbits of measure
  preserving dynamical systems. In the spirit of dynamical
  shrinking target problems we identify distance rates for which
  almost sure asymptotic closeness properties can be
  ensured. More precisely, we consider the set $E_n$ of pairs of
  points whose orbits up to time $n$ have minimal distance to
  each other less than the threshold $r_n$. We obtain bounds on
  the sequence $(r_n)_n$ to guarantee that $\limsup_{n}E_n$ and
  $\liminf_{n} E_n$ are sets of measure 0 or 1. Results for the
  measure 0 case are obtained in broad generality while the
  measure one case requires assumptions of exponential mixing for
  at least one of the systems. We also consider the analogous
  question of the minimal distance of points within a single
  orbit of one dimensional exponentially mixing dynamical systems. 
\end{abstract}

\maketitle

\section{Introduction}

In a metric space $(X,d)$, the problem of the shortest distance
between two orbits of a dynamical system $T \colon X\to X$, with an
ergodic measure $\mu$, was introduced in \cite{BarLiaRou19}. That
is, for $n\in \N$ and $x, y\in X$, they studied
\begin{equation}\label{eq:min1}
  \m_n(x, y)= \m_{T, n}(x, y):=\min_{0\le i,j<n} d(T^i(x),
  T^j(y))
\end{equation}
and showed that the decay of $\m_n$ depends on the correlation
dimension.

The \emph{lower correlation dimension} of $\mu$ is defined by 
\[
  \underline{C}_\mu:=\liminf_{r\to 0} \frac{\log\int\mu(B(x,
    r)) \, d\mu(x)}{\log r},
\]
and the \emph{upper correlation dimension} $\overline{C}_\mu$ is
analogously defined via the limsup.  If these are equal, then
this is $C_\mu$, the \emph{correlation dimension} of $\mu$. This
dimension plays an important role in the description of the
fractal structure of invariant sets in dynamical systems and has
been widely studied from different points of view: numerical
estimates (e.g.\ \cite{BadBro88, BesPalTurVai88, SprRow01}),
existence and relations with other fractal dimension (e.g.\
\cite{BarGerTch01, Pes93}) and relations with other dynamical
quantities (e.g.\ \cite{FarVai18, Man10}).

In \cite[Theorem~1]{BarLiaRou19}, under the assumption
$\underline{C}_\mu>0$, a general upper bound for $\m_n$ was
obtained:
\begin{theorem}For a dynamical system $(X,T,\mu)$, we have
\begin{equation*}
  \limsup_{n}\frac{\log \m_{T, n}(x, y)}{-\log n} \le
  \frac{2}{\underline{C}_\mu} \qquad (\mu\times\mu)\text{-a.e. } x,
  y. 
\end{equation*}
\label{thm:BLR}
\end{theorem}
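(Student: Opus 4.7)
The plan is to recast the conclusion as an almost-sure lower bound on $\m_n$: for every $\eps>0$, show that $\m_n(x,y)\ge n^{-(2/\underline{C}_\mu + \eps)}$ for all sufficiently large $n$ and $(\mu\times\mu)$-a.e.\ $(x,y)$. This is equivalent to the stated limsup inequality once one remembers that $-\log n<0$ (so dividing flips inequalities). The attack will be a first moment/union bound estimate combined with Borel--Cantelli applied along a geometric subsequence.

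The union bound gives
\[
\{\m_n<r\}\;\subseteq\;\bigcup_{0\le i,j<n}\{(x,y)\,:\,d(T^ix,T^jy)<r\},
\]
and since $T$ preserves $\mu$, the map $T^i\times T^j$ preserves $\mu\times\mu$, so each of the $n^2$ summands has $(\mu\times\mu)$-measure
\[
I(r) \;:=\; \int \mu(B(x,r))\,d\mu(x),
\]
yielding $(\mu\times\mu)(\m_n<r)\le n^2 I(r)$. The definition of $\underline{C}_\mu$, after inverting the liminf through the negative factor $\log r$, then supplies, for every $\delta>0$, an estimate $I(r)\le r^{\underline{C}_\mu-\delta}$ valid for all $r$ smaller than some $r_0(\delta)$.

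The delicate point is the choice of scales. A naive choice $r_n=n^{-\alpha}$ would require $\sum_n n^2 r_n^{\underline{C}_\mu-\delta}<\infty$, forcing $\alpha>3/(\underline{C}_\mu-\delta)$, which is off by a factor compared with the target $2/\underline{C}_\mu$. The standard remedy is to sparsify: take $n_k=2^k$ and $r_k=n_k^{-\alpha}$ with $\alpha>2/(\underline{C}_\mu-\delta)$. Then
\[
\sum_k n_k^{2}\,r_k^{\underline{C}_\mu-\delta} \;=\; \sum_k 2^{k[\,2-\alpha(\underline{C}_\mu-\delta)\,]} \;<\;\infty,
\]
so Borel--Cantelli gives $\m_{n_k}(x,y)\ge n_k^{-\alpha}$ for all large $k$, $(\mu\times\mu)$-a.e.

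Finally, monotonicity of $n\mapsto\m_n(x,y)$ transfers the subsequential bound back to all $n$: for $n_{k-1}<n\le n_k$ we have $\m_n\ge \m_{n_k}\ge n_k^{-\alpha}\ge (2n)^{-\alpha}$, whence $\log \m_n(x,y)/(-\log n)\le \alpha+o(1)$. Taking a countable sequence $\delta\downarrow 0$ (and correspondingly $\alpha\downarrow 2/\underline{C}_\mu$) yields the desired inequality almost surely. The only genuine obstacle is the sparsification trick in the third paragraph: without it the first moment bound falls short of the sharp exponent, and the monotonicity argument is what makes it suffice for every $n$.
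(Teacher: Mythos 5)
Your proof is correct and follows essentially the same route as the paper (and as \cite{BarLiaRou19}): a union/first-moment bound reduced by invariance to $n^2\int\mu(B(x,r))\,d\mu(x)$, the definition of $\underline{C}_\mu$ to bound this integral by $n^2 r^{\underline{C}_\mu-\delta}$, Borel--Cantelli along a dyadic subsequence, and monotonicity of $n\mapsto\m_{T,n}(x,y)$ to recover the full sequence. This is exactly the mechanism the paper packages more generally in Theorem~\ref{the:measurezero}(\ref{supzero}) via dyadic-block counting functions and Cauchy condensation.
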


To replace the inequality above with equality, in
\cite[Theorems~3 and 6]{BarLiaRou19}, the authors assumed that
$C_\mu$ exists and proved
\begin{equation}
  \liminf_{n}\frac{\log \m_{T,n}(x, y)}{-\log n} \ge
  \frac{2}{C_\mu} \qquad (\mu\times\mu)\text{-a.e. } x,
  y,\label{eq:WTS}
\end{equation}
using some exponential mixing conditions on the system.  This was
shown in \cite{RouTod24} to be unnecessary in cases where there
is an appropriate inducing scheme.

In the proofs of the above theorems, the approach was to find a
sequences $(r_n)_n$ and show, using the Borel--Cantelli Lemma,
that for almost every $(x, y)$ for all large enough $n$ either
$\m_{T, n}(x, y)\le r_n$, or $\m_{T, n}(x, y)\ge r_n$.  The
sequences were, for any $\eps>0$, of the form
$r_n = \frac1{(n^2\log n)^{(\underline{C}_\mu-\eps)}}$ in the
former case and
$r_n = \frac1{(n^2(\log n)^b)^{(\overline{C}_\mu+\eps)}}$, for
some $b<-4$, in the latter.  Our aim in the current work is to
refine these estimates and extend their applications.

\textit{Notation:}  we will sometimes use, for non-negative real sequences $(a_n)_n, (b_n)_n$,  the notation $a_n\lesssim b_n$ to mean that there is some $C>0$ such that $a_n\le C b_n$ for all $n$, similarly for $\gtrsim$.  If  $a_n\gtrsim b_n$ and  $a_n\lesssim b_n$, we write $a_n\asymp b_n$.

\section{Main results}
We begin with the most general setup which consists of two
probability preserving dynamical systems $(T_1,\mu_1)$ and
$(T_2,\mu_2)$ on the same space $X$, i.e.\
$T_1, T_2 \colon X\to X$. We let $(r_n)_n$ denote a sequence of
positive numbers. Note that in this setting
$\int \mu_1 (B(y, r_{n})) \, d\mu_2(y)= \int \mu_2 (B(y,
r_{n}))\, d\mu_1(y)$. We define the minimum analogously to
\eqref{eq:min1},
\[
  \m_n(x, y)= \m_{(T_1,T_2), n}(x, y):=\min_{0\le i,j< n}
  d(T_1^i(x), T_2^j(y)),
\]
and define 
\begin{align*}
  E_n=E_{n,r_n}^{T_1, T_2}:
  &= \bigl\{\, (x,y)\in X\times X: \m_{(T_1,T_2), n}(x, y)<
    r_n\}\\
  &= \bigl\{\, (x, y)\in X\times X: d(T_1^ix, T_2^jy)<r_n  \text{
    for some } 0 \leq i,j <n \, \bigr\}.
\end{align*}
Henceforth we abandon the use of $\m_n$ and instead use the
latter form of the above equation.  Ultimately one would like
conditions on the choices for $(r_n)_n$ of the following form:
\[
  \text{ Condition on } (r_n)_n \Longleftrightarrow
  (\mu_1\times\mu_2)(\liminf_n E_{n,r_n}^{T_1, T_2}) = 1,
\]
i.e., for almost every $(x, y)$, for all large enough $n$ there
exist $0\le i,j<n$ such that $d(T_1^ix, T_2^jy)<r_n$; and
\[
  \text{ Condition on } (r_n)_n \Longleftrightarrow
  (\mu_1\times\mu_2)(\limsup_n E_{n,r_n}^{T_1, T_2}) = 1,
\]
i.e., almost surely there are infinitely many $n$ such that there
exist $0\le i,j<n$ with $d(T_1^ix, T_2^jy)<r_n$.

Some results will be stated explicitely for the special case when
$T_1=T_2$ and $\mu_1=\mu_2$ in which case we simply talk about
the probability preserving dynamical system $(X,T,\mu)$ and write
\[
  E_n=E_{n,r_n}^T:= \bigl\{ \, (x, y)\in X\times X: d(T^ix,
  T^jy)<r_n \text{ for some } 0 \leq i,j < n \,\bigr\}.
\]
In the special case of one dynamical system we will also consider
the further specialized case of $x=y$, or in other words, the
case of one single orbit's minimal internal distance. In this
case we change notation and write
\[
  F_n=F_{n,r_n}^T := \{\, x\in X : d(T^i x, T^j x) < r_n \text{
    for some } 0 \leq i< j < n \,\}.
\]
The $\limsup$ and $\liminf$ results that we wish to obtain remain
analogous for this set.

The main results, which are to follow, are initially split into
the case of two distinct orbits and the case of one single orbit.

For two distinct orbits of two dynamical systems we obtain
conditions on measure zero for both $\liminf$ and $\limsup$ sets
in a very general setting. Specialising to two mixing systems we
obtain conditions for the $\liminf$ and $\limsup$ sets to have
measure one and simplify these conditions in a corollary when the
two mixing systems are indeed identical. In the special case of
one dynamical system which is chosen to be the doubling map we
get particularly sharp results. For the $\limsup$ set we obtain a
dichotomy and for the $\liminf$ set we obtain bounds sharp enough
to produce a shrinking rate of the $(r_n)_n$ which gives measure
zero for the $\liminf$ set but measure one for the $\limsup$
set. Our final result within the two distinct orbit case concerns
the situation when one system is a rotation with a Diophantine
condition and one system is mixing with respect to the Lebesgue
measure. In this case we also obtain conditions for which the
$\liminf$ and $\limsup$ sets have measure one.

The single orbit case is more difficult and we obtain conditions
for measure zero and one of the $\liminf$ and $\limsup$ sets
under somewhat stronger mixing assumptions.

\subsection{Results for two distinct orbits}

\begin{theorem}\label{the:measurezero}
  Let $(X,T_1,\mu_1)$ and $(X,T_2,\mu_2)$ be two probability
  preserving dynamical systems and $(r_n)_n$ a sequence of
  positive numbers. Then
  \begin{enumerate}
  \item \label{infzero} If $n^2 \int \mu_1 (B(y,r_n))\,d\mu_2(y)\to 0$, then
    \[(\mu_1 \times \mu_2) (\liminf_{n} E_{n,r_n}^{T_1,T_2}) =
      0.\]
  \item \label{supzero} If $n\int \mu_1 (B(y, r_{n}))d\mu_2(y)$
    is decreasing and
    $\sum_{n=1}^\infty n\int \mu_1 (B(y, r_{n})) \,
    d\mu_2(y)<\infty$ then
    \[(\mu_1\times\mu_2) (\limsup_n E_{n,r_n}^{T_1, T_2}) = 0.\]
  \end{enumerate}
\end{theorem}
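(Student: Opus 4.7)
The first step is a simple invariance argument: for any $i,j\geq 0$, $T_1$-invariance of $\mu_1$ in the $x$-integral, followed by $T_2$-invariance of $\mu_2$ in the $y$-integral, gives
\[
(\mu_1\times\mu_2)\bigl(\{(x,y):d(T_1^ix,T_2^jy)<r_n\}\bigr)=\int\mu_1(B(y,r_n))\,d\mu_2(y)=:\phi_n,
\]
and a union bound over the $n^2$ pairs $0\leq i,j<n$ yields the master estimate $(\mu_1\times\mu_2)(E_n)\leq n^2\phi_n$. Part~\ref{infzero} follows at once: the hypothesis is exactly $(\mu_1\times\mu_2)(E_n)\to 0$, and Fatou's lemma applied to $\mathbbm{1}_{E_n}$ gives $(\mu_1\times\mu_2)(\liminf_n E_n)\leq \liminf_n(\mu_1\times\mu_2)(E_n)=0$.

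For part~\ref{supzero} the naive Borel--Cantelli bound would require $\sum_n n^2\phi_n<\infty$, which is a factor of roughly $n$ stronger than the assumed $\sum_n n\phi_n<\infty$. My plan is to absorb this loss by condensing along a dyadic subsequence. First note that $n\phi_n$ non-increasing forces $\phi_{n+1}\leq\tfrac{n}{n+1}\phi_n$, so $(\phi_n)_n$ is itself non-increasing. Setting $R_k:=\max_{2^{k-1}\leq n<2^k}r_n$ and
\[
\widetilde E_k:=\bigcup_{0\leq i,j<2^k}\bigl\{(x,y):d(T_1^ix,T_2^jy)<R_k\bigr\},
\]
one checks that $E_n\subseteq\widetilde E_k$ for every $n\in[2^{k-1},2^k)$ (fewer pairs, larger radius), hence $\limsup_n E_n\subseteq\limsup_k\widetilde E_k$. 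Monotonicity of $\phi(r):=\int\mu_1(B(y,r))\,d\mu_2(y)$ together with $(\phi_n)_n$ non-increasing gives $\phi(R_k)=\max_{2^{k-1}\leq n<2^k}\phi_n=\phi_{2^{k-1}}$, whence
\[
(\mu_1\times\mu_2)(\widetilde E_k)\leq 2^{2k}\phi_{2^{k-1}}=4\cdot 2^{k-1}b_{2^{k-1}},\qquad b_n:=n\phi_n.
\]
A Cauchy-condensation-type inequality for the decreasing summable sequence $(b_n)_n$ gives $\sum_k 2^{k-1}b_{2^{k-1}}\lesssim\sum_n b_n<\infty$, and Borel--Cantelli then yields $(\mu_1\times\mu_2)(\limsup_k\widetilde E_k)=0$, completing the proof.

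The main obstacle is precisely this factor-of-$n$ gap in part~\ref{supzero}: directly summing $(\mu_1\times\mu_2)(E_n)$ is wasteful because the events at scales $n\in[2^{k-1},2^k)$ all fit inside a single coarser event $\widetilde E_k$, and it is only after this compression that the summability assumption aligns with what Borel--Cantelli requires. The monotonicity assumption on $n\phi_n$ is used in two places: to pass from $\phi_n$ to its value at the left endpoint $2^{k-1}$ of each dyadic block, and to justify the Cauchy-condensation step for $(b_n)_n$.
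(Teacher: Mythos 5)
Your proof is correct and follows essentially the same route as the paper's: a first-moment (union-bound) estimate plus Fatou for part (\ref{infzero}), and a dyadic compression of the events combined with Cauchy condensation and Borel--Cantelli for part (\ref{supzero}). A small bonus: by working with $R_k=\max_{2^{k-1}\le n<2^k}r_n$ and deducing the monotonicity of $\phi_n$ from that of $n\phi_n$, you avoid the additional assumption that $(r_n)_n$ itself is decreasing, which the paper's proof of part (\ref{supzero}) implicitly invokes.
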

We note that the result holds with the obvious simplifications
when the two dynamical systems are identical.  For part
(\ref{supzero}), the summability condition on the $(r_n)_n$ holds
for example if
$\int\mu_1(B(y, r_n)) \, d\mu_2(y)\lesssim 1/(n^2 (\log
n)^{1+\eps})$.  Note that in the case of one dynamical system,
criteria of this type appear in other results, such as
\cite[Theorem~C]{KirKunPer23}, where
$\sum_{n=1}^\infty \int\mu(B(y, r_n)) \, d\mu(y) <\infty$ is
assumed in order to get a result on returns of $x$ to itself.

We will need the following definition of exponential mixing wrt.\
observables in the well-known function spaces $BV$ and
$L^{\infty}$.
\begin{definition}
  Let $(X, T, \mu)$ denote a measure preserving system where $X\subset \R$. If there are
  $C, \theta>0$ such that for all $\psi\in BV$ and
  $\phi\in L^\infty$,
  \[
    \biggl|\int \psi\cdot \phi\circ T^n \, d\mu- \int \psi \,
    d\mu \int \phi \, d\mu \biggr| \le
    C\|\psi\|_{BV}\|\phi\|_{L^\infty} e^{-\theta n},
  \]
  then we say that $(X, T, \mu)$ has \emph{exponential mixing for
    $BV$ against $L^\infty$}.
\end{definition}
In Section~\ref{ssec:single}, some results will require $L^\infty$ to be replaced by $L^1$ in this definition. Systems that are known to satisfy exponential mixing for $BV$
against either $L^1$ or $L^{\infty}$ include piecewise expanding interval maps
with $\mu$ being a Gibbs measure and quadratic maps with
Benedicks--Carleson parameter and $\mu$ being the absolutely
continuous invariant measure (\cite{LivSauVai98},
\cite{You92}).

The next definition concerns exponential mixing wrt.\ observables
in $L^{\infty}$ and the less-known function space
$V_{\alpha}$. This space along with its associated norm
$|\cdot|_{\alpha}$ was introduced by Saussol in \cite{Sau00}
where he defined and studied multidimensional piecewise expanding
maps (see also \cite[section~5]{BarLiaRou19}). The definition of these maps
and of $V_{\alpha}$ is rather involved and extensive, hence we
refrain from giving the details and refer the interested reader
to the cited papers. For our purposes the intuition that the
$|\cdot|_{\alpha}$-norm is an analogue of the $BV$-norm for
higher dimensional maps suffices (in fact, here we only use that characteristic function on balls have norm bounded independently of the ball, as well as the maps $x\mapsto \mu(B(x, r))$) along with the fact that Saussol
proved that his multidimensional piecewise expanding maps defined
on a compact subset of $\R^n$ satisfy the following mixing
property.
\begin{definition}[see \cite{Sau00} and \cite{BarLiaRou19}]\label{def:mixingalphaLinfty}
  Let $(X, T, \mu)$ denote a measure preserving system where
  $X\subset \R^n$. If there are $C, \theta>0$ such that for all
  $\psi\in V_{\alpha}$ and $\phi\in L^\infty$,
  \[
    \biggl|\int \psi\cdot \phi\circ T^n \, d\mu- \int \psi \,
    d\mu \int \phi \, d\mu \biggr| \le
    C|\psi|_{\alpha}\|\phi\|_{L^\infty} e^{-\theta n},
  \]
  then we say that $(X, T, \mu)$ has \emph{exponential mixing for
    $V_{\alpha}$ against $L^\infty$}.
\end{definition}

\begin{theorem}
  \label{thm:supinf2} Let $(X,T_1,\mu_1)$ and $(X,T_2,\mu_2)$ be
  two probability preserving systems which are exponentially
  mixing for either $BV$ against $L^\infty$ or $V_{\alpha}$
  against $L^{\infty}$. Let $(r_n)_n$ be a sequence of positive
  numbers.
  \begin{enumerate}  
  \item \label{thm:supinf2-1} If $(r_n)_n$ is decreasing and for
    some $\eps>0$ we have
    \[
      \int\mu_1(B(y, r_n)) \, d\mu_2(y) \gtrsim \frac{(\log
        n)^3(\log\log n)^{1+\eps}}{n^2}
    \]
    and
    \begin{equation}
      \label{eq:extraconditionA}
      \frac{\bigl(\int\mu_i (B(y, r_n)) \, d\mu_i (y)
        \bigr)^{\frac12}}{\int\mu_1(B(y, r_n)) \,
        d\mu_2(y)} \lesssim \frac{n}{(\log n)^2(\log\log
        n)^{1+\eps}}, \quad i = 1,2,
    \end{equation}
    then
    \[(\mu_1\times\mu_2) \left(\liminf_n E_{n,r_n}^{T_1, T_2}\right) = 1.\]
    
  \item \label{thm:supinf2-2} If for some $h(n)\to\infty$ we have
    \[
      \int\mu_1(B(y, r_n)) \, d\mu_2(y) \geq \frac{(\log n)^2
        h(n)}{n^2}
    \]
    and
    \begin{equation}
      \label{eq:extraconditionB}
      \frac{\bigl(\int\mu_i (B(y, r_n)) \, d\mu_i (y)
        \bigr)^{\frac12}}{\int\mu_1(B(y, r_n)) \, d\mu_2(y)}
      \leq \frac{n}{(\log n)h(n)}, \quad i = 1,2,
    \end{equation}
    then
    \[(\mu_1\times\mu_2) \left(\limsup_n E_{n,r_n}^{T_1, T_2}\right) = 1.\]
  \end{enumerate}
\end{theorem}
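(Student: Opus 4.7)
The strategy is a second-moment argument applied to the counting statistic
\[
\phi_n(x,y) := \sum_{i,j=0}^{n-1} \1\{d(T_1^i x, T_2^j y) < r_n\},
\]
so that $E_n = \{\phi_n > 0\}$. By the $T_i$-invariances, $\E[\phi_n] = n^2 I_{12}$ where $I_{12} := \int \mu_1(B(y, r_n))\,d\mu_2(y)$, and the hypotheses force $\E[\phi_n]\to\infty$. For part (\ref{thm:supinf2-1}) one bounds $(\mu_1\times\mu_2)(E_n^c) = \P(\phi_n = 0) \le \mathrm{Var}(\phi_n)/\E[\phi_n]^2$ by Chebyshev and aims to make this summable (directly or along a subsequence, with a bridging argument) so that the first Borel--Cantelli lemma applies. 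For part (\ref{thm:supinf2-2}) the same variance estimate feeds a Kochen--Stone-type argument, yielding $(\mu_1\times\mu_2)(\limsup_n E_n) = 1$ from $\P(E_n)\to 1$ plus quasi-independence of the events $E_n$ along a sufficiently spaced subsequence.

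The heart of the proof is the variance bound, obtained by two applications of exponential mixing. Writing $A_{i,j} := \1\{d(T_1^i x, T_2^j y) < r_n\}$ and setting $a = i' - i$, $b = j' - j$, invariance gives
\[
\E[A_{i,j} A_{i',j'}] \;=\; \int \1_{B(Y,r_n)}(X)\,\1_{B(T_2^b Y,r_n)}(T_1^a X)\,d\mu_1(X)\,d\mu_2(Y).
\]
When $a, b \neq 0$, one applies $T_1$-mixing in $X$ (both indicators have $BV$ or $V_\alpha$ norm bounded uniformly in the centre) followed by $T_2$-mixing in $Y$ applied to $Y \mapsto \mu_1(B(Y, r_n))$, whose norm is likewise uniformly bounded by the function-space hypothesis; this yields $\mathrm{Cov}(A_{i,j}, A_{i',j'}) = O(e^{-\theta|a|} + e^{-\theta|b|})$. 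In the single-diagonal case $a = 0$ only $T_2$-mixing is available, giving
\[
\E[A_{i,j} A_{i,j+b}] \;=\; \int \mu_2(B(X, r_n))^2\, d\mu_1(X) + O(e^{-\theta|b|}),
\]
and the main term is controlled via Fubini, the triangle inequality ($d(Y, X), d(X, Z) < r_n \Rightarrow d(Y, Z) < 2 r_n$), and Cauchy--Schwarz, by $\lesssim (I_{12} I_{22})^{1/2}$. The case $b = 0$ is symmetric and involves $I_{11}$.

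To render the off-diagonal error innocuous one works with a sparsified statistic $\phi_n' = \sum_{(i,j) \in S_1 \times S_2} A_{i,j}$ over $K$-separated sets $S_i \subset [0, n)$ of size $\asymp n/K$, with $K \asymp \log n$ chosen so that $e^{-\theta K}$ is polynomially small in $n^{-1}$. Combining, one obtains a bound of the form
\[
\frac{\mathrm{Var}(\phi_n')}{\E[\phi_n']^2} \;\lesssim\; \frac{K^2}{n^2 I_{12}} \;+\; \frac{K\,(I_{12}\,I_{ii})^{1/2}}{n\, I_{12}^2} \;+\; \text{(negligible)}.
\]
Hypothesis \eqref{eq:extraconditionA} is precisely calibrated so that each contribution is of order $1/((\log n)(\log\log n)^{1+\eps})$, which is summable along the dyadic sub-sequence $n_k = 2^k$. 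A bridging step --- applying the same estimate to $\phi_{n_k}$ evaluated at the slightly smaller radius $r_{n_{k+1}}$, and noting that monotonicity of $(r_n)_n$ makes a close encounter of size $r_{n_{k+1}}$ at times $< n_k$ a witness for every $E_n$ with $n \in [n_k, n_{k+1})$ --- upgrades the conclusion from the subsequence to all $n$, completing part (\ref{thm:supinf2-1}). Part (\ref{thm:supinf2-2}) proceeds analogously under the weaker condition \eqref{eq:extraconditionB}, with Paley--Zygmund and a Kochen--Stone-style second Borel--Cantelli replacing Chebyshev plus first Borel--Cantelli.

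The main technical obstacle is the single-diagonal estimate: when $a = 0$ the $x$-orbit is fixed and $T_1$-mixing cannot be applied, so one must absorb $\int \mu_2(B(X, r_n))^2 d\mu_1(X) - I_{12}^2$ into the self-correlation integrals $I_{ii}$, which is precisely the reason both \eqref{eq:extraconditionA} and \eqref{eq:extraconditionB} involve $I_{ii}$ rather than purely $I_{12}$. The bridging step is the other delicate ingredient, requiring monotonicity of $(r_n)_n$ in part (\ref{thm:supinf2-1}) together with uniformity of the $BV/V_\alpha$ bounds on $y\mapsto \mu_i(B(y, r_n))$ as the radius varies.
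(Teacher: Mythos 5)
Your overall strategy coincides with the paper's: a second-moment/Borel--Cantelli argument run along the dyadic subsequence with the radius $r_{2^{k+1}}$ used at time horizon $2^k$ (so that monotonicity of $(r_n)_n$ bridges to all $n$), two successive applications of exponential mixing for well-separated index pairs (the second applied to $y\mapsto\mu_1(B(y,r))$, whose $BV$-norm is uniformly bounded, cf.\ Lemma~\ref{lem:BV}), and a separate treatment of pairs in which one of the two time gaps vanishes or is small. Your sparsification of the index set into $K$-separated blocks with $K\asymp\log n$ is a mild variant of the paper's decomposition into separated/non-separated pairs with a logarithmic threshold; both cost the same $(\log n)^2$ factor relative to $\E[\phi_n]$, which is why the hypothesis carries $(\log n)^3$ in the numerator. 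For part~(\ref{thm:supinf2-2}) your Paley--Zygmund/reverse-Fatou route is a harmless repackaging of the paper's $L^2$-convergence argument.

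The one step that does not close as written is the single-diagonal estimate. You bound the main term $\int\mu_2(B(x,r_n))^2\,d\mu_1(x)$ by $(I_{12}I_{22})^{1/2}$, so that your second variance term is $K(I_{12}I_{ii})^{1/2}/(nI_{12}^2)=KI_{ii}^{1/2}/(nI_{12}^{3/2})$. But hypothesis \eqref{eq:extraconditionA} controls $I_{ii}^{1/2}/I_{12}$, not $I_{ii}^{1/2}/I_{12}^{3/2}$: inserting \eqref{eq:extraconditionA} leaves a residual factor $K/\bigl((\log n)^2(\log\log n)^{1+\eps}I_{12}^{1/2}\bigr)$, which at the threshold $I_{12}\asymp(\log n)^3(\log\log n)^{1+\eps}/n^2$ is of order $n/\bigl((\log n)^{5/2}(\log\log n)^{(3+3\eps)/2}\bigr)$ --- unbounded, let alone summable. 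What is needed, and what the paper proves as Lemma~\ref{lem:tight} via a covering argument yielding the pointwise bound $\mu_i(B(y,r))\le K_0\bigl(\int\mu_i(B(x,r))\,d\mu_i(x)\bigr)^{1/2}$, is the stronger estimate $\int\mu_i(B(x,r))^2\,d\mu_j(x)\lesssim I_{ii}^{1/2}I_{ij}$, i.e.\ $I_{ii}^{1/2}I_{12}$ rather than $(I_{ii}I_{12})^{1/2}$ (the latter is larger by the factor $I_{12}^{-1/2}\to\infty$). With the corrected bound the term becomes $KI_{ii}^{1/2}/(nI_{12})\lesssim 1/\bigl((\log n)(\log\log n)^{1+\eps}\bigr)$, summable along $n_k=2^k$, and the rest of your argument goes through.
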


\begin{remark}
  If $\mu_1$ or $\mu_2$ are Ahlfors regular, i.e., there exist
  $C, s>0$ such that
  \[
    \frac1Cr^s\le \mu(B(x, r))\le Cr^s \text{ for all } x,
  \]
  then all of our conditions on measures of balls, and their
  integrals, depend only on $s$ and $(r_n)_n$.  
\end{remark}

We examine the conditions on $(r_n)_n$ in part~(\ref{thm:supinf2-1}) of Theorem~\ref{thm:supinf2} in two examples. Part~(\ref{thm:supinf2-2}) can be considered
in the same way.
\begin{example}
If $\mu_1$ and $\mu_2$ are equivalent and there is
a constant $c \geq 1$ such that
\[
  c^{-1} \leq \frac{d \mu_1}{d \mu_2} \leq c,
\]
then for all large $n$ it is in this case enough, since
$\varepsilon > 0$ is arbitrary, to
require in part~(\ref{thm:supinf2-1}) that
\[
  \int \mu_1 (B(y,r_n)) \, d \mu_1 (y) \geq \frac{(\log
    n)^4 (\log \log n)^{2+\varepsilon}}{n^2},
\]
and
\[
  \int \mu_2 (B(y,r_n)) \, d \mu_2 (y) \geq \frac{(\log
    n)^4 (\log \log n)^{2+\varepsilon}}{n^2}.
\]
\end{example}

\begin{example}
Suppose that $(T_1,\mu_1)$ is
  the doubling map with Lebesgue measure, and that $(T_2,\mu_2)$
  is a quadratic map for a Benedicks--Carleson parameter, with
  the invariant measure which is absolutely continuous with
  respect to Lebesgue measure.

  We have of course that $r \leq \mu_1 (B(y,r)) \leq 2r$, so that
  \[
    r \leq \int \mu_1 (B(y,r)) \, d \mu_1 (y) \leq 2r
  \]
  and
  \[
    r \leq \int \mu_1 (B(y,r)) \, d \mu_2 (y) \leq 2r.
  \]

  To estimate $\int \mu_2 (B(y,r)) \, d \mu_2 (y)$, we note that
  it is known that $\mu_2$ has a density which is sum of a BV function,
  bounded away from zero, and countably many one-sided
  singularities of the form $1/\sqrt{x}$, see \cite[Theorem 1]{You92}. Because of this, we
  have $c r \leq \mu_2 (B(y,r))$ and
  \[
    c r \leq \int \mu_2 (B(y,r)) \, \mathrm{d} \mu_2 (y).
  \]

  To get an upper bound, it is enough to consider the case that
  the density of $\mu_2$ is $h (x) = 1/\sqrt{x}$ on $[0,1]$. We
  then have that
  \[
    \mu_2 (B(y,r)) = \int_{y-r}^{y+r} \frac{1}{\sqrt{x}} \, dx \leq
    \frac{2r}{\sqrt{y-r}}
  \]
  if $y-r \geq 0$ and
  \[
    \mu_2 (B(y,r)) = \int_{0}^{y+r} \frac{1}{\sqrt{x}} \, dx = 2
    \sqrt{y+r} \leq 2 \sqrt{2} \sqrt{r}
  \]
  if $y -r < 0$.

  Hence,
  \begin{align*}
    \int \mu_2 (B(y,r)) \, d \mu_2 (y)
    & \leq \int_0^r 2\sqrt{2r}
      \frac{1}{\sqrt{y}} \, dy + \int_r^1     \frac{2r}{\sqrt{y-r}}
      \frac{1}{\sqrt{y}} \, dy \\
    & \leq 4 \sqrt{2 r} \sqrt{r} + 2r \int_0^1 \frac{1}{\sqrt{y}
      \sqrt{y+r}} \, dy \\
    & = 4 \sqrt{2} r + 4r \log (1/\sqrt{r} + \sqrt{1 +
      1/r}) \\
    & \leq C r |\log r|.
  \end{align*}
  The conditions in Theorem~\ref{thm:supinf2}~(\ref{thm:supinf2-1}) on $r_n$ are therefore the
  following:
  \begin{align*}
    r_n &\geq \frac{(\log n)^3 (\log \log n)^{1+
      \varepsilon}}{n^2},\\
    \frac{\sqrt{r_n}}{r_n} &\leq \frac{n}{(\log n)^2 (\log \log
      n)^{1+\varepsilon}}, \\
    \frac{\sqrt{r_n |\log r_n|}}{r_n} &\leq
      \frac{n}{(\log n)^2 (\log \log n)^{1+\varepsilon}}.
  \end{align*}
  This simplifies to
  \[
    r_n \geq \frac{(\log n)^3 (\log \log n)^{1+
      \varepsilon}}{n^2}, \quad \text{and} \quad \frac{r_n}{|\log
    r_n|} \geq \frac{(\log n)^4 (\log \log n)^{2 + 2
      \varepsilon}}{n^2}.
  \]
  Hence, it is enough to require that
  \[
    r_n \geq \frac{(\log n)^5 (\log \log n)^{2 + 
        \varepsilon}}{n^2},
  \]
  for some $\varepsilon > 0$.
\end{example}

The proof of Theorem~\ref{thm:supinf2} will be given for the case
of exponential mixing for $BV$ against $L^{\infty}$. The proof
for $V_{\alpha}$ against $L^{\infty}$ is essentially identical
but would require us to give the precise definition of
$V_{\alpha}$ and $|\cdot|_{\alpha}$.

As an immediate corollary of Theorem~\ref{thm:supinf2} we obtain the simpler statement for one system.
\begin{corollary}\label{thm:supinf}
  Let $(X,T,\mu)$ be a probability preserving dynamical system
  which is exponentially mixing for either $BV$ against
  $L^\infty$ or $V_{\alpha}$ against $L^{\infty}$. Let $(r_n)_n$
  be a sequence of positive numbers.
  \begin{enumerate}
  \item If $(r_n)_n$ is decreasing and
    \[
      \int\mu(B(y, r_n)) \, d\mu(y) \gtrsim \frac{(\log n)^4(\log\log
        n)^{2+\eps}}{n^2},
    \]
    then 
    \[(\mu\times\mu) (\liminf_n E_{n,r_n}^T) = 1.\]
    \item If for some function $h(n)\to\infty$ 
    \[
    \int\mu(B(y, r_n)) \, d\mu(y) \ge \frac{(\log n)^2 h(n)}{n^2},
    \]
    then 
    \[(\mu\times\mu) (\limsup_n E_{n,r_n}^T) = 1.\]
  \end{enumerate}
\end{corollary}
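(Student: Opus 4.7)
The plan is to deduce both parts of the corollary directly from Theorem~\ref{thm:supinf2} by specialising to $T_1 = T_2 = T$ and $\mu_1 = \mu_2 = \mu$, as the statement is advertised as an immediate consequence of the preceding theorem. The only real task is to verify that the single integral hypothesis on $\int\mu(B(y,r_n))\,d\mu(y)$ in the corollary is strong enough to force both the primary lower bound in the theorem and the auxiliary condition \eqref{eq:extraconditionA} or \eqref{eq:extraconditionB}. In the identical setting, the numerator and denominator of these auxiliary ratios involve the same integral, so the ratio collapses to $\bigl(\int\mu(B(y,r_n))\,d\mu(y)\bigr)^{-1/2}$, and each auxiliary condition reduces, after squaring, to a further quadratic lower bound on this integral.

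For part~(a), that quadratic bound reads $\int\mu(B(y,r_n))\,d\mu(y) \gtrsim (\log n)^4(\log\log n)^{2+2\eps'}/n^2$; taking $\eps' = \eps/2$ in the theorem identifies it with the corollary's hypothesis, and the weaker primary bound $(\log n)^3(\log\log n)^{1+\eps'}/n^2$ of Theorem~\ref{thm:supinf2}~(\ref{thm:supinf2-1}) then holds automatically. For part~(b), I would set $\tilde h(n) := \sqrt{h(n)}$, which still diverges to infinity. The corollary's hypothesis then gives $\bigl(\int\mu(B(y,r_n))\,d\mu(y)\bigr)^{1/2} \geq (\log n)\tilde h(n)/n$, which is precisely \eqref{eq:extraconditionB} with $\tilde h$ in place of $h$, and for $n$ large enough that $\tilde h(n) \geq 1$ it also yields the primary bound with $\tilde h$. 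Applying Theorem~\ref{thm:supinf2}~(\ref{thm:supinf2-2}) with $\tilde h$ then concludes.

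There is no substantive obstacle here: the corollary is a bookkeeping consequence of Theorem~\ref{thm:supinf2}, and the only point to check is the algebraic absorption of the auxiliary condition into the main hypothesis, which amounts to a single squaring in each case.
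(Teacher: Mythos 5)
Your proposal is correct and matches the paper's intent exactly: the paper presents this as an immediate consequence of Theorem~\ref{thm:supinf2}, and your verification that in the case $\mu_1=\mu_2=\mu$ the auxiliary conditions \eqref{eq:extraconditionA} and \eqref{eq:extraconditionB} collapse (after squaring) to the single integral hypotheses stated in the corollary, with $\eps'=\eps/2$ in part~(a) and $\tilde h=\sqrt{h}$ in part~(b), is precisely the bookkeeping the paper leaves to the reader.
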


\subsection*{The doubling map case}

Let $X=[0,1]$, $T \colon X\to X$ be given by $Tx=2x \mod 1$, let
$\mu$ denote the Lebesgue measure on $X$ and let $d$ denote the
Euclidian metric on $X$. We note that all results in this
subsection also hold for the map $Tx=kx \mod 1$, $k\in\N$,
$k \geq 2$.

For $\liminf_{n}E_n$ we get the following result.
\begin{theorem}\label{the:2xliminf} \text{ }
  Let $(r_n)_n$ be a sequence of positive numbers.
  \begin{enumerate}
  \item \label{2xliminf0} If $n^2 r_n\to 0$ then 
    \[(\mu \times \mu)\left(\liminf_{n} E_{n,r_n}^T\right)=0.\]
  \item \label{2xliminf1}\label{prop:liminf1} If $(r_n)_n$ is decreasing and 
    \begin{equation}\label{eq:liminf1}
      \sum_{n=0}^{\infty} \frac{1}{2^{2n}r_{2^n}}<\infty.
    \end{equation}
    then
    \begin{equation*}
      (\mu \times \mu)\left(\liminf_{n} E_{n,r_n}^T\right)=1.
    \end{equation*}
  \end{enumerate}
\end{theorem}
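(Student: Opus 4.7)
The plan for part~(\ref{2xliminf0}) is to apply Theorem~\ref{the:measurezero}(\ref{infzero}) directly with $T_1=T_2=T$ and $\mu_1=\mu_2=\mu$: since $\mu(B(y,r_n))\le 2r_n$ uniformly in $y$, the hypothesis $n^2 r_n\to 0$ forces $n^2\int\mu(B(y,r_n))\,d\mu(y)\le 2n^2 r_n\to 0$, which is exactly what that theorem requires.

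For part~(\ref{2xliminf1}) the goal is $(\mu\times\mu)(\limsup_n A_n)=0$, where $A_n:=(E_{n,r_n}^T)^c=\{(x,y):d(T^ix,T^jy)\ge r_n\text{ for all }0\le i,j<n\}$. My first step is a reduction to the dyadic subsequence $n_k:=2^k$: because $(r_n)_n$ is decreasing and $\min_{0\le i,j<n}d(T^ix,T^jy)$ is non-increasing in $n$, for $n_k\le n<n_{k+1}$ one has $A_n\subseteq B_k$ with
\[
  B_k:=\bigl\{(x,y):d(T^ix,T^jy)\ge r_{n_{k+1}}\text{ for all }0\le i,j<n_k\bigr\}.
\]
Hence $\limsup_n A_n\subseteq\limsup_k B_k$, and by the first Borel--Cantelli lemma it suffices to prove $\sum_k(\mu\times\mu)(B_k)<\infty$.

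To estimate $(\mu\times\mu)(B_k)$ I would use a second-moment argument with the counting variable
\[
  Z_k(x,y):=\#\bigl\{(i,j):0\le i,j<n_k,\ d(T^ix,T^jy)<r_{n_{k+1}}\bigr\},
\]
so that $B_k=\{Z_k=0\}$ and Chebyshev's inequality yields $(\mu\times\mu)(B_k)\le\mathrm{Var}(Z_k)/(\E Z_k)^2$. Lebesgue invariance gives $\E Z_k=2n_k^2 r_{n_{k+1}}$, while the key input for $\E Z_k^2$ is the explicit algebra of the doubling map. Writing $u=T^ix$, $v=T^jy$, $T^{i'}x=2^{i'-i}u\bmod 1$, $T^{j'}y=2^{j'-j}v\bmod 1$ and substituting $w=u-v$, the joint probability of the events $\{d(T^ix,T^jy)<r_{n_{k+1}}\}$ and $\{d(T^{i'}x,T^{j'}y)<r_{n_{k+1}}\}$ reduces to a one-dimensional Lebesgue integral governed by the linear map $s\mapsto(2^{i'-i}-2^{j'-j})s\bmod 1$. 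When $i'-i\ne j'-j$ this map is surjective onto $[0,1)$ and the integral evaluates to exactly $4r_{n_{k+1}}^2$, i.e.\ the product of the marginals; the diagonal case $i'-i=j'-j$ and the degenerate cases $i=i'$ or $j=j'$ are controlled by the elementary bound $\mu(A\cap T^{-m}B)\lesssim r\min(2^{-m},r)$ for balls $A,B$ of radius $r$.

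The main obstacle is the careful bookkeeping that combines these into the estimate $\mathrm{Var}(Z_k)\lesssim n_k^3 r_{n_{k+1}}^2+n_k^2 r_{n_{k+1}}$. Once that is in place, Chebyshev gives $(\mu\times\mu)(B_k)\lesssim 1/n_k+1/(n_k^2 r_{n_{k+1}})=2^{-k}+1/(2^{2k}r_{2^{k+1}})$. The series $\sum_k 2^{-k}$ is trivially convergent, and $\sum_k 1/(2^{2k}r_{2^{k+1}})$ coincides, after re-indexing and up to a factor of $4$, with $\sum_k 1/(2^{2k}r_{2^k})$, which is finite by hypothesis~\eqref{eq:liminf1}. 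Borel--Cantelli then yields $(\mu\times\mu)(\limsup_k B_k)=0$, completing the proof; the improvement relative to Corollary~\ref{thm:supinf} is precisely due to the algebraic structure of the doubling map forcing the off-diagonal joint probabilities to collapse exactly to the product of marginals.
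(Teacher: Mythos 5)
Your proof of part~(\ref{2xliminf0}) is exactly the paper's (reduce to Theorem~\ref{the:measurezero}(\ref{infzero}) via $\mu(B(y,r))\le 2r$), and your proof of part~(\ref{2xliminf1}) is essentially the paper's as well: pass to the dyadic blocks, run a second-moment/Chebyshev argument on the pair-counting variable, and exploit the algebra of the doubling map to show that the indicator pairs with $i'-i\ne j'-j$ are \emph{exactly} uncorrelated while the diagonal pairs $i'-i=j'-j=p$ contribute a geometrically decaying covariance; the paper establishes this dichotomy by expanding $\1_{B(0,r)}$ in Fourier series, whereas you do it by a direct change of variables, but the resulting variance bound and the reduction of \eqref{eq:liminf1} via re-indexing are identical. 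One slip: your stated elementary bound $\mu(A\cap T^{-m}B)\lesssim r\min(2^{-m},r)$ is false for small $m$ (e.g.\ $\mu(B(0,r)\cap T^{-1}B(0,r))=r$); the correct statement is $\mu(A\cap T^{-m}B)\le \mu(A)\mu(B)+Cr2^{-m}$, i.e.\ the \emph{covariance} is $\lesssim r2^{-m}$, and with this corrected bound the sum over $p$ still yields your claimed estimate $\mathrm{Var}(Z_k)\lesssim n_k^3r_{n_{k+1}}^2+n_k^2r_{n_{k+1}}$ without any spurious logarithmic factor.
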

\begin{remark}
  We note that by Cauchy condensation (see
  Lemma~\ref{lem:CauchyC} below), we have
  \[
    \sum_{n = 1}^\infty \frac{1}{n^3 r_n} < \infty \quad
    \Leftrightarrow \quad \sum_{n=0}^{\infty}
    \frac{1}{2^{2n}r_{2^n}}<\infty,
  \]
  provided $(n^3 r_n)_n$ is increasing. Condition
  \eqref{eq:liminf1} is for example satisfied for
  \begin{equation*}
    r_n= \frac{\log n(\log\log n)^{1+\epsilon}}{n^2}
  \end{equation*} 
  for any $\epsilon>0$. In this case we see that
  \[
    \sum_{n=0}^{\infty} \frac{1}{n^3 r_{n}} =\sum_{n=0}^{\infty}
    \frac{1}{n \log n (\log \log n)^{1+\epsilon}} < \infty.
  \]
\end{remark}

We are able to prove the following dichotomy for
$\limsup_{n} E_n$.
\begin{theorem}\label{thm:2xmod1Dicho}
  Let $(r_n)_n$ be a decreasing sequence of positive numbers
  s.t.\ also $(nr_n)_n$ is decreasing. Then
  \begin{equation*}
    (\mu \times \mu)(\limsup_{n}E_{n,r_n}^T)=\begin{cases}
      0 & \text{if } \sum_{n=1}^{\infty} nr_n<\infty,\\
      1 & \text{if } \sum_{n=1}^{\infty} nr_n=\infty.
    \end{cases}
  \end{equation*}
\end{theorem}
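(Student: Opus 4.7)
Plan. The two sides of the dichotomy are treated separately.

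\emph{Convergent side.} If $\sum n r_n<\infty$, a direct computation for the doubling map gives $\int\mu(B(y,r_n))\,d\mu(y)=2r_n-r_n^2\asymp r_n$, so the hypothesis that $(nr_n)$ is decreasing and summable is precisely the summability condition of Theorem~\ref{the:measurezero}~(\ref{supzero}), which immediately yields $(\mu\times\mu)(\limsup_n E_{n,r_n}^T)=0$.

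\emph{Divergent side -- independent subevents.} For $\sum nr_n=\infty$, the plan is to exhibit, along a sparse subsequence, subevents of $E_{n,r_n}^T$ that are mutually independent and whose measures sum to infinity, and then apply the classical second Borel--Cantelli lemma. Set $k_n=\lceil\log_2(2/r_n)\rceil$ and let $\cI_n$ denote the partition of $[0,1]$ into the $2^{k_n}$ dyadic intervals of length $2^{-k_n}$, so that any two points sharing a cell of $\cI_n$ lie within distance $r_n$ of each other. Define
\[
  \tilde E_n\;:=\;\bigcup_{i,j\in[n/2,\,n)}\bigl\{(x,y):T^ix,\,T^jy\text{ lie in a common cell of }\cI_n\bigr\}\;\subset\; E_{n,r_n}^T.
\]
Since membership of a point in a cell of $\cI_n$ is determined by its first $k_n$ binary digits, $\tilde E_n$ is measurable with respect only to the finitely many bits $(x_t,y_t)$ with $t\in[\lfloor n/2\rfloor+1,\,n+k_n]$. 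The set of $n$ with $r_n\le 2^{-n/2}$ contributes to $\sum nr_n$ at most $\sum n\cdot 2^{-n/2}<\infty$, so the divergence hypothesis forces a geometrically growing subsequence $(n_\ell)$ with $n_\ell\asymp 4^\ell$ and $k_{n_\ell}<n_\ell/2$; along this subsequence the bit-windows of distinct $\tilde E_{n_\ell}$ are pairwise disjoint for large $\ell$, and as Lebesgue measure on $X\times X$ is Bernoulli in these binary coordinates the events $(\tilde E_{n_\ell})_\ell$ are mutually independent.

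\emph{Divergent side -- measure bound and conclusion.} Let $M_n$ count the pairs $(i,j)\in[n/2,n)^2$ for which $T^ix,T^jy$ share a cell of $\cI_n$, so $\tilde E_n=\{M_n\ge 1\}$. A direct bit-level calculation gives $\E(M_n)\asymp n^2r_n$ and
\[
  \E(M_n^2)\;\le\;C\bigl(n^4r_n^2+n^2r_n\bigr),
\]
since index-quadruples with shifted-diagonal offset $(a,a)$ contribute $\asymp r_n\,2^{-|a|}$ (summing to $O(n^2r_n)$) while all other off-diagonal offsets contribute the product value $\asymp r_n^2$ (summing to $O(n^4r_n^2)$). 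Paley--Zygmund then gives $(\mu\times\mu)(\tilde E_n)\gtrsim\min(n^2r_n,1)$. Cauchy condensation applied to the decreasing sequence $(nr_n)$ converts the divergence hypothesis into $\sum_j 4^jr_{2^j}=\infty$; since $(2^jr_{2^j})$ is itself decreasing the even-index subsum is comparable to the whole, giving $\sum_\ell 16^\ell r_{4^\ell}=\infty$ and hence $\sum_\ell\min(n_\ell^2r_{n_\ell},1)=\infty$, i.e.\ $\sum_\ell(\mu\times\mu)(\tilde E_{n_\ell})=\infty$. The second Borel--Cantelli lemma for independent events then yields $(\mu\times\mu)(\limsup_\ell\tilde E_{n_\ell})=1$, and the inclusion $\tilde E_{n_\ell}\subset E_{n_\ell,r_{n_\ell}}^T$ concludes the proof.

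\emph{Main obstacle.} The chief difficulty is to replace the distance condition $\{d(T^ix,T^jy)<r_n\}$, which depends on the full tails of the binary expansions of $x$ and $y$ because of carries near dyadic boundaries, by an event measurable with respect to only finitely many bits. Requiring instead co-membership in a common dyadic cell of $\cI_n$ cleanly localises the dependence at the cost only of a universal constant in probability; this is exactly what enables the independence scheme along $n_\ell\asymp 4^\ell$, together with the Paley--Zygmund second-moment estimate and Cauchy condensation, to close the argument.
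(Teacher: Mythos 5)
Your proof is correct, and the divergent half takes a genuinely different route from the paper. The paper works with the full array of events $C_{i,j}=\{(x,y): d(T^ix,T^jy)\le r_j\}$ (note the radius $r_j$ rather than $r_n$, which decouples the events from $n$), reorders them into a single sequence, computes the pair correlations exactly by Fourier series on the torus --- the only resonant quadruples being those with $j_2-j_1=i_2-i_1$, contributing $\ll (r_{j_1}r_{j_2})^{1/2}2^{-(j_2-j_1)/2}$ --- and closes with the Erd\H{o}s--R\'enyi quasi-independence Borel--Cantelli lemma. You instead discretise to dyadic cells so that the events become measurable with respect to finitely many binary digits, restrict to indices $i,j\in[n/2,n)$ and to the sparse subsequence $n_\ell\asymp 4^\ell$ so that the digit windows are pairwise disjoint, and thereby obtain \emph{genuine} independence under the Bernoulli structure of Lebesgue measure; the classical second Borel--Cantelli lemma plus a Paley--Zygmund second-moment bound then finish the job, with your digit-level covariance computation (nonzero only on the shifted diagonal $i_2-i_1=j_2-j_1$, of size $\lesssim r_n2^{-|a|}$) playing exactly the role of the paper's Fourier resonance analysis. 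Your argument is more elementary --- no Erd\H{o}s--R\'enyi lemma is needed --- but leans harder on the exact base-$2$ Bernoulli structure, whereas the paper's correlation method transfers more readily to situations where only quasi-independence is available. Two small points to tighten: the assertion that divergence ``forces a geometrically growing subsequence with $k_{n_\ell}<n_\ell/2$'' should be phrased as: the $\ell$ with $k_{n_\ell}\ge n_\ell/2$ satisfy $r_{n_\ell}\lesssim 2^{-n_\ell/2}$ and hence contribute finitely to $\sum_\ell 16^\ell r_{4^\ell}$, so the subsum over the remaining $\ell$ still diverges and one runs the independence argument only over those; and the claim that non-shifted-diagonal offsets $(a,b)$ with $a\ne b$, both smaller than $k_n$, contribute exactly the product value deserves the short bit-counting verification (the two consistency constraints on the overlapping windows force a periodicity on the cell addresses whose count exactly restores $2^{-2k_n}$), since it is the one place where cancellation is not obvious.
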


\begin{remark}
  Actually, $(nr_n)_n$ being decreasing is only required for the
  measure 0 case. For the measure 1 case it is sufficient to
  assume that $(r_n)_n$ is decreasing.
\end{remark}

\begin{remark}
  We note that Theorem~\ref{the:2xliminf} and
  Theorem~\ref{thm:2xmod1Dicho} together show that radii
  sequences with
  $0=(\mu \times \mu)(\liminf_{n} E_n)<(\mu \times \mu)(\limsup_{n}
  E_n)=1$ exist. Take for example
  \begin{equation*}
    r_n=\frac{1}{n^2 \log n}.
  \end{equation*}
  Then $n^2r_n=\frac{1}{\log n}\to 0$ and
  $\sum_{n=1}^{\infty}nr_n=\sum_{n=1}^{\infty}\frac{1}{n\log
    n}=\infty$ which means that the conditions for both results
  are satisfied.
\end{remark}

\subsection{Results when one system is a rotation}

\begin{theorem}
   Suppose that $X$ is the circle and both $\mu_1$ and $\mu_2$ are
  Lebesgue measure. Moreover, suppose that $(X,T_1,\mu_1)$ is
  exponentially mixing for $BV$ against $L^{\infty}$ and that
  $(X,T_2,\mu_2)$ is a rotation by an angle $\alpha$. Let
  $(r_n)_n$ be a sequence of positive numbers.
  \begin{enumerate}
  \item \label{thm:rotinfzero} If $n^2 r_n\to 0$, then
    $(\mu_1 \times \mu_2) (\liminf_{n\to\infty}
    E_{n,r_n}^{T_1,T_2}) = 0.$
  \item \label{thm:rotsupzero} If $(n r_n)_n$ is decreasing and
    $\sum_{n=1}^\infty nr_n<\infty$, then
    $(\mu_1\times\mu_2) (\limsup_n E_{n,r_n}^{T_1,T_2}) = 0.$
  \item \label{thm:rotinfone} Suppose some $\eps>0$ that $\alpha$
    satisfies
    $$|q \alpha - p| \geq c(\alpha) (\log q)^2 \cdot
    (\log\log(q))^{1+\eps}/ q^{2}$$ for all sufficiently large
    $q \in \mathbb{N}$. If $(r_n)_n$ is decreasing and satisfies
    $r_n \gtrsim \frac{(\log n)^2(\log\log n)^{1+\delta}}{n^2}$ with
    $0<\delta<\eps$, then
    $(\mu_1\times\mu_2) (\liminf_n E_{n,r_n}^{T_1,T_2}) = 1$.
  \item \label{thm:rotsupone} Suppose $\alpha$ satisfies
    $|q \alpha - p| \geq c(\alpha) \log q \cdot \phi(q) / q^{2}$
    for all sufficiently large $q \in \mathbb{N}$, where
    $\phi(q) \to \infty$ as $q \to \infty$. If $(r_n)_n$
    satisfies $r_n \geq \frac{\log n \cdot h(n)}{n^2}$ for some
    $h(n)$ with $h(n) \to \infty$ and $h(n)/\phi(n) \to 0$ as
    $n \to \infty$, then
    $(\mu_1\times\mu_2) (\limsup_n E_{n,r_n}^{T_1,T_2}) = 1$.
  \end{enumerate}
  \label{thm:rotation}
\end{theorem}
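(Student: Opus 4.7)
Parts (\ref{thm:rotinfzero}) and (\ref{thm:rotsupzero}) are immediate consequences of Theorem~\ref{the:measurezero}: since both measures are Lebesgue on the circle, $\int \mu_1(B(y, r_n))\,d\mu_2(y) = 2r_n$ regardless of the dynamics, and the hypotheses translate directly to those of parts~(\ref{infzero}) and~(\ref{supzero}) respectively.

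For parts (\ref{thm:rotinfone}) and (\ref{thm:rotsupone}), the plan is to adapt the proof of Theorem~\ref{thm:supinf2}, replacing the exponential mixing of $T_2$ with the deterministic Diophantine rigidity of the rotation; this exchange is precisely what saves one logarithmic factor in the threshold on $r_n$ compared to Theorem~\ref{thm:supinf2}. After fixing $y$, the event $(x,y) \in E_{n,r_n}$ is the hitting event $T_1^i x \in U_n(y)$ for some $0 \le i < n$, where $U_n(y) := \bigcup_{0 \le j < n} B(y + j\alpha, r_n)$. The Diophantine hypothesis implies $\min_{1 \le q < n}\|q\alpha\| \ge c(\alpha)\psi(n)/n^2$, with $\psi$ the Diophantine function appearing in the statement, and thanks to the strict inequality $\delta < \eps$ in (\ref{thm:rotinfone}) (respectively $h(n)/\phi(n) \to 0$ in (\ref{thm:rotsupone})), one may reduce to $r_n$ below half of this minimum gap (larger $r_n$ only makes hitting easier); then $U_n(y)$ is a disjoint union of $n$ intervals of total length exactly $2nr_n$.

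The remaining step is a hitting-time analysis for $T_1$ against the target $U_n(y)$. Using exponential mixing for $BV$ against $L^\infty$ applied iteratively along a subsequence of times of spacing $M \asymp \log n$ (chosen so that the mixing error absorbs the factor $\|\mathbf{1}_{U_n(y)}\|_{BV} = 2n$), one obtains for the no-hit set $\{x : T_1^i x \notin U_n(y) \text{ for all } 0 \le i < n\}$ a measure bound of the form $\exp(-c\,n^2 r_n/\log n)$ plus a polynomially summable correction. For part (\ref{thm:rotinfone}), the hypothesis yields $n^2 r_n / \log n \gtrsim (\log n)(\log\log n)^{1+\delta}$, so this bound is summable in $n$; integrating in $y$ and applying the first Borel--Cantelli lemma gives $(\mu_1\times\mu_2)(\liminf_n E_{n,r_n}^{T_1,T_2}) = 1$.

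Part (\ref{thm:rotsupone}) will be proved by a converse Borel--Cantelli argument along a subsequence $(n_k)$ chosen sparsely enough that the events $E_{n_k,r_{n_k}}^{T_1,T_2}$ are pairwise asymptotically independent under the mixing of $T_1$; divergence of $\sum_k (\mu_1\times\mu_2)(E_{n_k,r_{n_k}}^{T_1,T_2})$ follows from $2n_k r_{n_k} \ge 2 \log n_k \cdot h(n_k)/n_k$ together with the matching hit-probability lower bound obtained as above, and Kochen--Stone then gives $\limsup$ of full measure. The main technical difficulty, common to both parts, is tracking the linearly growing $BV$ norm of the target against the exponential mixing rate; this is the source of the extra logarithmic factors in the hypotheses on $r_n$, compared to the naive threshold $n^2 r_n \to \infty$.
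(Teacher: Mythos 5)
Your treatment of parts (\ref{thm:rotinfzero}) and (\ref{thm:rotsupzero}) coincides with the paper's. For parts (\ref{thm:rotinfone}) and (\ref{thm:rotsupone}) you take a genuinely different route. The paper runs the second-moment method on the pair-counting function $\hat S_n(x,y)=\sum_{i,j}\1_{B(T_2^jy,r_{2^{n+1}})}(T_1^ix)$, splitting the fourth-order sum according to whether $i_2-i_1$ exceeds $cn$ and using the Diophantine condition only through the observation that for fixed $x,y,i_1,i_2,j_1$ at most one $j_2$ can contribute (Claim~\ref{claim:Dioph}); this collapses the non-separated block to $O(n\,\E(\hat S_n))$ and yields a summable (resp.\ vanishing) normalised variance. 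You instead freeze $y$, exploit the same Diophantine separation to realise the target $U_n(y)=\bigcup_j B(y+j\alpha,r_n)$ as a disjoint union of $n$ arcs of total mass $2nr_n$ and BV norm $O(n)$, and run an iterated-mixing hitting estimate at time spacing $M\asymp\log n$ to get $\mu_1(\{x: T_1^ix\notin U_n(y)\ \forall i<n\})\le e^{-cn^2r_n/\log n}+O(n^{2-\theta c'})$ uniformly in $y$. This is sound: the reduction to $r_n$ below half the minimal gap is legitimate by monotonicity of $E_{n,r_n}$ in $r_n$, and under the hypothesis of part (\ref{thm:rotinfone}) the bound is summable over \emph{all} $n$, so first Borel--Cantelli applied to the complements gives the full-measure $\liminf$ directly, without the dyadic subsequence the paper needs. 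Your method trades the variance computation for a quantitative no-hit estimate; it arguably gives a cleaner proof of (\ref{thm:rotinfone}) and would tolerate a slightly weaker threshold on $r_n$.

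For part (\ref{thm:rotsupone}), however, the step you propose would fail as stated. The events $E_{n_k,r_{n_k}}^{T_1,T_2}$ for different $k$ all refer to the orbit segments over $[0,n_k)$ starting at time $0$; they overlap in the time indices they use, so no choice of sparse subsequence makes them ``pairwise asymptotically independent under the mixing of $T_1$'' --- mixing decorrelates events supported on well-separated time windows, not nested ones. Fortunately neither Kochen--Stone nor any independence is needed: your own no-hit bound already gives $(\mu_1\times\mu_2)(E_{n}^c)\le e^{-ch(n)}+o(1)\to 0$ under $r_n\ge \log n\cdot h(n)/n^2$ with $h(n)\to\infty$, hence $(\mu_1\times\mu_2)(E_n)\to 1$, and since $(\mu_1\times\mu_2)(\limsup_n E_n)\ge\limsup_n(\mu_1\times\mu_2)(E_n)$ the conclusion follows at once. (The paper's version of this step is the analogous one: it only shows the normalised variance of $S_n$ tends to $0$ and extracts an a.s.\ convergent subsequence.) With that repair your plan proves the theorem.
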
 
We emphazise that the Diophantine condition imposed on the angle
$\alpha$ in parts (\ref{thm:rotinfone}) and (\ref{thm:rotsupone})
is satisfied for almost every $\alpha$. This follows from
Khinchin's Theorem on metric Diophantine approximation which
implies that if $\sum_q \psi(q)$ converges, then a.e.\ number
$\alpha$ satisfies $|q\alpha - p|\geq \psi(q)$ for all
sufficiently large $q\in\N$. For example, our Diophantine
conditions are satisfied for all numbers of Diophantine exponent
strictly less than 1. Here we recall that $\alpha$ has
Diophantine exponent $\sigma$ if there exists a constant $C>0$
such that $|q\alpha-p|\geq \frac{C}{q^{1+\sigma}}$ for all
$q\in\N$.

\subsection{Results for a single orbit}
\label{ssec:single}

Recall the notation
\[
  F_n = F_{n,r_n}^T = \{\, x : d(T^i x, T^j x) < r_n \text{ for
    some } 0 \leq i< j < n \,\}.
\]
We will need the following definition.
\begin{definition}
  Let $(X, T, \mu)$ denote a measure preserving system where $X\subset \R$. If there exist $C', \theta'>0$ such that for all
  $\psi_1, \psi_2\in BV$, $\phi_1, \phi_2\in L^\infty$, and $0
  \leq a < b \leq c$,
  \begin{align*}
    & \biggl | \int \psi_1\cdot \psi_2\circ T^a\cdot \phi_1\circ
      T^b\cdot \phi_2\circ T^c \, d\mu - \int \psi_1\cdot
      \psi_2\circ T^a \, d\mu \int \phi_1\cdot
      \phi_2\circ T^{c-b} \, d\mu \biggr|\\
    &\hspace{8cm} \le C'(\psi_1,\psi_2,\phi_1,\phi_2) e^{-\theta'
      (b-a)},
  \end{align*}
  then we say that $(X, T, \mu)$ has \emph{exponential 4-mixing
    for $BV$ against $L^\infty$}.
\end{definition}
Note that by setting $\psi_1=\phi_2=\mathrm{Id}$ we get 2-mixing for $BV$ against $L^{\infty}$ with the same constants $C'$ and $\theta'$ as in the 4-mixing estimate. 
The 4-mixing property is known to hold for Gibbs--Markov interval maps, see \cite[Lemma~4.16]{Zha24}. 
In our case each of the observables will be characteristic
functions on an interval, and we will assume that $C'$ can be
taken independently of the intervals.  See \cite[Lemma
4.16]{Zha24} for a case of this.

\begin{remark}
The uniformity of $C'$ is automatic for intervals. By
Collet \cite{Col99} (using Banach--Steinhaus), we have
$C' (\psi_1,\psi_2,\phi_1,\phi_2) = C \lVert \psi_1 \rVert_{BV}
\lVert \psi_2 \rVert_{BV} \lVert \phi_1 \rVert_\infty \lVert
\phi_2 \rVert_\infty$, and for intervals all these norms are
uniformly bounded.
\end{remark}

Let $A (r,n) := \{\, x : d (x,T^n x) < r \,\}$. We will need to assume the
following short return time estimate which states that there exist $C$ and $s>0$ such that
\begin{equation}
  \label{eq:shortreturnestimate}
  \mu (A (r,n)) \leq C r^s.
\end{equation}
There are several known estimates of this type in the literature,
see for instance Holland, Nicol and Török \cite[Lemma~3.4]{HNT},
Kirsebom, Kunde and Persson \cite[Section~6.2]{KirKunPer23} and Holland,
Kirsebom, Kunde and Persson \cite[Lemma~13.7]{HolKirKunPer24}.

\begin{theorem}
  Let $(X,T,\mu)$ be probability preserving dynamical system with
  an interval $X\subset \R$. Let $(r_n)_n$ be a sequence of
  positive numbers.
	\begin{enumerate}
		\item\label{thm:1orbinfzero} Assume $(X, T, \mu)$ is exponentially mixing for $BV$
		against $L^1$, and that the short return time estimate
		\eqref{eq:shortreturnestimate} holds with constants
		$C,s > 0$.
		If $r_n \leq 1/((n \log n)^\frac{1}{s} h(n))$ and
		$\int \mu (B(x,r_n)) \, d \mu(x) \leq \frac{1}{n^2 h(n)}$ for
		some function $h(n) \to \infty$ when $n \to \infty$, then
		$\mu (\liminf_n F_{n,r_n}^T) = 0$.

		\item\label{thm:1orbsupzero} Assume $(X,T,\mu)$ is exponentially mixing for $BV$
		against $L^1$, that the short return time estimate
		\eqref{eq:shortreturnestimate} holds with constants $C,s > 0$
		and that $r_n \lesssim n^{-1/s} (\log n)^{-2/s - \varepsilon}$
		for some $\varepsilon$.
		If $n\int \mu(B(x,r_n))\, d\mu(x)$ is decreasing and 
 		$\sum_{n=1}^\infty n \int \mu (B(x,r_n)) \, \mathrm{d} \mu(x) <
		\infty$, then $\mu (\limsup_n F_{n,r_n}^T) = 0$.
		
		\item\label{thm:1orbinfone} Assume $(X, T, \mu)$ is exponentially 4-mixing for $BV$
		against $L^\infty$. Let $(r_n)_n$ be decreasing such that $r_n\ge n^{-\beta}$ for some $\beta>0$
		and
		$\int\mu(B(y, r_n)) \, d\mu(y) \ge \frac{(\log n)^4(\log\log
		n)^{2+\eps}}{n^2}$, then $\mu(\liminf_n F_{n,4r_n}^T) = 1$.
		
		\item\label{thm:1orbsupone} Assume $(X, T, \mu)$ is exponentially 4-mixing for $BV$
		against $L^\infty$, $r_n\ge n^{-\beta}$ for some $\beta>0$
		and
		$\int\mu(B(y, r_n)) \, d\mu(y) \ge \frac{(\log n)^2
			h(n)}{n^2}$, for some function $h(n) \to \infty$ as
		$n \to \infty$, then $\mu(\limsup_n F_{n,4r_n}^T) = 1$.
	\end{enumerate}
	\label{thm:1supinf}
\end{theorem}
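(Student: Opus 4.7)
My plan is to split Theorem \ref{thm:1supinf} into two pairs. Parts (\ref{thm:1orbinfzero})--(\ref{thm:1orbsupzero}) follow from a union bound on pairs of orbit times, treating small and large time differences separately via the short return estimate \eqref{eq:shortreturnestimate} and $BV$-against-$L^1$ mixing. Parts (\ref{thm:1orbinfone})--(\ref{thm:1orbsupone}) follow from a Paley--Zygmund second-moment argument in which 4-mixing is used to decouple a single orbit into two widely separated pieces, mirroring the structure of Corollary \ref{thm:supinf}.

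For parts (\ref{thm:1orbinfzero}) and (\ref{thm:1orbsupzero}) I start from
\[
  \mu(F_{n,r_n}^T) \le \sum_{k=1}^{n-1}(n-k)\,\mu(A(r_n,k))
\]
(union bound and $T$-invariance) and split the sum at a threshold $k_0 \asymp |\log r_n|$. For $k \le k_0$, \eqref{eq:shortreturnestimate} gives $\mu(A(r_n,k)) \le C r_n^s$. For $k > k_0$, I cover each ball $B(x,r_n)$ by intervals of length $\sim r_n$ whose indicators have uniformly bounded $BV$-norm and $L^1$-norm of order $r_n$, then apply $BV$-against-$L^1$ mixing termwise to obtain $\mu(A(r_n,k)) \lesssim \int\mu(B(y,r_n))\,d\mu(y) + e^{-\theta k}$. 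In part (\ref{thm:1orbinfzero}) each of the four resulting contributions vanishes as $n \to \infty$, so $\mu(F_{n,r_n}^T)\to 0$ and hence $\mu(\liminf_n F_{n,r_n}^T)=0$. In part (\ref{thm:1orbsupzero}) I pass to the dyadic subsequence $n_k = 2^k$, and use decreasingness of $(r_n)$ to bound $\limsup_n F_{n,r_n}^T \subseteq \limsup_k H_k$, where
\[
  H_k := \{\, x \in X : d(T^i x, T^j x) < r_{2^k} \text{ for some } 0 \le i < j < 2^{k+1}\,\}.
\]
The same decomposition bounds $\mu(H_k)$; the short-range part is summable thanks to $r_n \lesssim n^{-1/s}(\log n)^{-2/s-\varepsilon}$, and the long-range part by Cauchy condensation of the hypothesis that $n\int\mu(B(y,r_n))\,d\mu$ is decreasing and summable. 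Borel--Cantelli concludes.

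For parts (\ref{thm:1orbinfone}) and (\ref{thm:1orbsupone}), decompose $\{T^j x : 0 \le j < n\}$ into a head $\{T^i x : 0 \le i < m\}$ and a tail $\{T^{3m+j} x : 0 \le j < m\}$ with $m = \lfloor n/4 \rfloor$, separated by a gap of at least $2m$ time steps. Let $N(x)$ count the head-tail cross-pairs $(i, j)$ with $d(T^i x, T^{3m+j}x) < r_n$; then $\{N \ge 1\} \subseteq F_{n, 4 r_n}^T$. Sandwiching $\mathbbm{1}\{d < r_n\}$ from below by $\sum_\ell \mathbbm{1}_{I_\ell}(T^i u)\mathbbm{1}_{I_\ell}(T^{3m+j}u)$ (disjoint $I_\ell$ of diameter $r_n$) and from above by $\sum_\ell \mathbbm{1}_{I_\ell^*}(T^i u)\mathbbm{1}_{I_\ell^{**}}(T^{3m+j}u)$ (suitable enlargements), the 4-mixing estimate with time labels $(0, i, 3m, 3m+j)$ and gap $\ge 2m$ controls the first and second moments of $N$ up to errors of order $e^{-\theta' m}/r_n^2$; these are exponentially small in $n$ thanks to the polynomial lower bound $r_n \ge n^{-\beta}$. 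One then obtains $\mathbb{E}[N] \gtrsim m^2 \int\mu(B(y, r_n))\,d\mu$ and $\mathrm{Var}(N) \lesssim \mathbb{E}[N]$, so by Paley--Zygmund
\[
  \mu\bigl(F_{n, 4r_n}^{T,c}\bigr) \le \mu(N = 0) \lesssim \frac{1}{\mathbb{E}[N]}.
\]
Along the subsequence $n_k = 2^k$, this bound is summable in part (\ref{thm:1orbinfone}) and tends to $0$ in part (\ref{thm:1orbsupone}); Borel--Cantelli (first lemma for (\ref{thm:1orbinfone}); second lemma together with 4-mixing-based approximate pairwise independence of the events along widely separated $k$ for (\ref{thm:1orbsupone})) then yields the dyadic versions, and the factor $4$ in the conclusion provides the slack needed to transfer back to all $n$.

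The main obstacle is the quantitative 4-mixing transfer in parts (\ref{thm:1orbinfone}) and (\ref{thm:1orbsupone}): one must absorb the $1/r_n^2$ factor appearing in the second-moment bound from the $\sim 1/r_n$ intervals in the cover of $X$, which is precisely why the polynomial lower bound $r_n \ge n^{-\beta}$ is imposed. Parts (\ref{thm:1orbinfzero}) and (\ref{thm:1orbsupzero}) are computationally involved but conceptually routine variants of the two-orbit arguments underlying Theorem \ref{the:measurezero}.
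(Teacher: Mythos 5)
Your overall architecture coincides with the paper's: parts~(\ref{thm:1orbinfzero}) and~(\ref{thm:1orbsupzero}) via a first-moment/union bound split between short gaps (handled by \eqref{eq:shortreturnestimate}) and long gaps (handled by the $BV$-against-$L^1$ estimate \eqref{eq:KKPestimate}), with a dyadic subsequence and Cauchy condensation for the $\limsup$ case; parts~(\ref{thm:1orbinfone}) and~(\ref{thm:1orbsupone}) via a second-moment argument on a head--tail cross-count separated by a gap of order $n$, with the cover sandwich of Lemma~\ref{lem:GRS} producing the factor $4$. Those parts of the proposal are sound (your cut-off $k_0\asymp|\log r_n|$ in place of the paper's $3\theta^{-1}\log n$ works, but needs a small case analysis when $|\log r_n|\gg\log n$).

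There is, however, a genuine gap in parts~(\ref{thm:1orbinfone})--(\ref{thm:1orbsupone}): the claimed bound $\mathrm{Var}(N)\lesssim\E[N]$ is false. The $4$-mixing step with gap $\ge 2m$ only decouples the head pair $(i_1,i_2)$ from the tail pair $(j_1,j_2)$; it leaves you with integrals of the form $\int\1_{I_{\ell_1}}\cdot\1_{I_{\ell_2}}\circ T^{|i_2-i_1|}\,d\mu$ and $\int\1_{I_{\ell_1}}\cdot\1_{I_{\ell_2}}\circ T^{|j_2-j_1|}\,d\mu$, which require a second layer of decorrelation and a further split according to whether $|i_2-i_1|$ and $|j_2-j_1|$ exceed $c\log(\text{horizon})$. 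The totally non-separated configurations contribute of order $(\log n)^2\,\E[N]$, and the half-separated ones (e.g.\ $i_1$ near $i_2$ but $j_1,j_2$ far apart) contribute, after Cauchy--Schwarz and Lemma~\ref{lem:tight}, of order $(\log n)\,\E[N]^{3/2}$, which dominates $\E[N]$ whenever $\E[N]\to\infty$. The correct bound is therefore $\mathrm{Var}(N)\lesssim(\log n)^2\E[N]+(\log n)\E[N]^{3/2}+(\text{exp.\ small})$, and it is precisely the $\E[N]^{3/2}$ term that forces the hypothesis $\int\mu(B(y,r_n))\,d\mu(y)\gtrsim(\log n)^4(\log\log n)^{2+\eps}/n^2$; under your claimed variance bound the theorem would follow from the much weaker threshold $(\log n)^{1+\delta}/n^2$, which should have been a warning sign. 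The Chebyshev/Borel--Cantelli endgame survives once the variance bound is corrected ($(\log 2^k)^2/\E[N_{2^k}]$ and $(\log 2^k)/\E[N_{2^k}]^{1/2}$ are both summable under the stated hypothesis). Two smaller points: for the dyadic transfer you must evaluate the radius at the top of each dyadic block while restricting times to the bottom (the factor $4$ comes from the cover, not from this mismatch); and for part~(\ref{thm:1orbsupone}) no second Borel--Cantelli or approximate independence across scales is needed --- $\mu(N_n=0)\to 0$ already gives $\mu(\limsup_n\{N_n\ge 1\})=1$ by reverse Fatou, which is the route the paper takes.
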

\begin{remark}
	Theorem~\ref{thm:1supinf} is stated for $X\subset \R$ being an interval. We could have stated it more generally for a compact subset $X\subset \R^n$ by changing the definition of 4-mixing analogue to Definition~\ref{def:mixingalphaLinfty}, i.e.\ by exchanging the $BV$-norm with the $|\cdot|_{\alpha}$-norm. However, beyond interval maps there are, to our knowledge, no known examples of systems satisfying either this generalized 4-mixing for $V_{\alpha}$ or the short return time estimate of \eqref{eq:shortreturnestimate}. For this reason we opted for the simpler formulation. 
\end{remark}
Examples of systems for which Theorem~\ref{thm:1supinf} holds
include the doubling map (and more generally $Tx=kx \mod 1$ for
some $k\in\N$, $k \geq 2$), piecewise expanding interval maps where $\mu$ is absolutely continuous wrt.\ Lebesgue, and the Gauss map with the Gauss measure.

The proof of Theorem~\ref{thm:1supinf}(\ref{thm:1orbinfone}) is similar to the proof of
\cite[(4.2) in Theorem~4.8]{Zha24}: note that many of the ideas there were
developed for a different case in \cite{GouRouSta24}.

\section{Two mixing systems proofs}

We include a useful version of Cauchy Condensation, see for
example the proof of \cite[Proposition~10.1]{HolKirKunPer24}:

\begin{lemma}
  Suppose $(c_n)_n$ has $c_n\geq c_{n+1}>0$ and $a>1$.  Then
  \[
    \sum_{n=1}^\infty c_n<\infty \quad \Longleftrightarrow \quad
    \sum_{k=1}^\infty a^kc_{\lfloor a^k\rfloor}<\infty.
  \]
  \label{lem:CauchyC}
\end{lemma}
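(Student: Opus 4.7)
The plan is to reduce to the classical Cauchy condensation argument by grouping the terms of $\sum_n c_n$ into blocks indexed by consecutive powers of $a$. Set $N_k := \lfloor a^k \rfloor$ and, for each $k \geq 0$, define the block $B_k := \{\, n \in \N : N_k \leq n < N_{k+1}\, \}$. These blocks partition $\N \cap [N_0, \infty)$, and only finitely many positive (hence bounded by $c_1$) terms lie below $N_0$, so they are irrelevant for the question of convergence.

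Next I would apply monotonicity on each block. Since $c_n \geq c_{n+1} > 0$, every $n \in B_k$ satisfies $c_{N_{k+1}} \leq c_n \leq c_{N_k}$. Consequently
\[
  |B_k| \, c_{N_{k+1}} \;\leq\; \sum_{n \in B_k} c_n \;\leq\; |B_k| \, c_{N_k}.
\]
The block size $|B_k| = N_{k+1} - N_k$ satisfies $a^{k+1} - a^k - 1 \leq |B_k| \leq a^{k+1} - a^k + 1$, so choosing $K_0$ large enough that $a^{k}(a-1) \geq 2$ for $k \geq K_0$ gives $|B_k| \asymp a^k$ with constants depending only on $a$. Summing over $k \geq K_0$, this yields the two-sided comparison
\[
  \sum_{k \geq K_0} a^{k} c_{N_{k+1}} \;\lesssim\; \sum_{n \geq N_{K_0}} c_n \;\lesssim\; \sum_{k \geq K_0} a^{k} c_{N_k}.
\]

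Finally I would close the loop by noting that the lower bound is essentially the same series as the upper bound up to an index shift: reindexing $k \mapsto k-1$ gives $\sum_{k \geq K_0} a^{k} c_{N_{k+1}} = a^{-1} \sum_{k \geq K_0+1} a^{k} c_{N_k}$. Hence the upper and lower bounds are simultaneously finite or infinite, proving that $\sum_n c_n < \infty$ iff $\sum_k a^k c_{N_k} < \infty$, which is the claim. The only (minor) obstacle is bookkeeping the floor function and the index shift with enough care to ensure the constants are positive and finite; there is no substantive difficulty beyond the classical $a = 2$ case.
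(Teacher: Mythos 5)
Your proof is correct. The paper does not actually prove Lemma~\ref{lem:CauchyC} itself --- it only cites the proof of Proposition~10.1 in \cite{HolKirKunPer24} --- and your argument is precisely the standard block-decomposition condensation proof one would expect there: partition $\N$ into blocks $[\lfloor a^k\rfloor, \lfloor a^{k+1}\rfloor)$, use monotonicity of $(c_n)_n$ to bound the block sums above and below by $|B_k|\,c_{\lfloor a^k\rfloor}$ and $|B_k|\,c_{\lfloor a^{k+1}\rfloor}$, note $|B_k|\asymp a^k$ for large $k$, and reindex. Your handling of the floor function (restricting to $k\ge K_0$ so that $a^k(a-1)\ge 2$, which also disposes of possibly empty blocks when $a$ is close to $1$) is exactly the bookkeeping needed, so there is no gap.
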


We also require the following results.

\begin{lemma}
  Suppose that $(X,\mu_1)$, $(X,\mu_2)$ are probability spaces with $X\subset \R^n$. Then there exists $K>0$ such that
  if $r>0$ is sufficiently small then
  \[
    \mu_i(B(y, r))\le K \biggl(\int\mu_i(B(x,
      r)) \, d\mu_i(x) \biggr)^{\frac12}\quad \text{for } i=1,2.
  \]
  Hence also
  \[
    \int\mu_i(B(x, r))^2 \, d\mu_j(x) \le K \biggl(\int\mu_i(B(x,
      r)) \, d\mu_i(x)\biggr)^{\frac12}\int\mu_i(B(x,
    r)) \, d\mu_j(x).
  \]
  \label{lem:tight}
  for $(i,j)=(1,2), (2,1)$.
\end{lemma}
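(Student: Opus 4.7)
The plan is to prove the first (pointwise) inequality by a simple covering argument in $\R^n$, and then derive the second (integrated) inequality by inserting the first into the integrand.

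For the first inequality, I would exploit the fact that $\R^n$ is doubling as a metric space: there exists a constant $N = N(n)$ such that every Euclidean ball of radius $r$ can be covered by at most $N$ balls of radius $r/2$. Fix $y \in X$ and a covering $B(y,r) \subset \bigcup_{k=1}^{N} B(z_k, r/2)$. The key observation is that for every $x \in B(z_k, r/2)$, the triangle inequality gives $B(z_k, r/2) \subset B(x, r)$, so $\mu_i(B(x, r)) \ge \mu_i(B(z_k, r/2))$. Integrating this estimate over $x \in B(z_k, r/2)$ against $\mu_i$ yields
\[
\int \mu_i(B(x, r)) \, d\mu_i(x) \ge \int_{B(z_k, r/2)} \mu_i(B(x, r)) \, d\mu_i(x) \ge \mu_i(B(z_k, r/2))^2,
\]
and hence $\mu_i(B(z_k, r/2)) \le \bigl(\int \mu_i(B(x, r)) \, d\mu_i(x)\bigr)^{1/2}$. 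Summing over the $N$ balls of the cover gives $\mu_i(B(y,r)) \le N \bigl(\int \mu_i(B(x, r)) \, d\mu_i(x)\bigr)^{1/2}$, so the first inequality holds with $K := N(n)$.

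For the second inequality, I would simply write $\mu_i(B(x, r))^2 = \mu_i(B(x, r)) \cdot \mu_i(B(x, r))$, bound one of the two factors pointwise by the first inequality, and pull the resulting constant $K \bigl(\int \mu_i(B(x, r)) \, d\mu_i(x)\bigr)^{1/2}$ outside the integral with respect to $\mu_j$, leaving $\int \mu_i(B(x, r)) \, d\mu_j(x)$ inside. This gives exactly the displayed bound, for both $(i,j) = (1,2)$ and $(2,1)$.

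The argument is essentially mechanical; the only step that requires a choice is the covering constant, and in $\R^n$ this is a standard volume comparison and does not require any doubling assumption on $\mu_i$ itself. The phrase ``sufficiently small $r$'' in the statement is not really needed for this covering argument, so I would simply note that the inequality in fact holds for all $r > 0$ with $K = N(n)$.
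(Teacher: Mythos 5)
Your proposal is correct and follows essentially the same covering argument as the paper: cover $B(y,r)$ by boundedly many balls of radius $r/2$, use $B(z,r/2)\subset B(x,r)$ for $x\in B(z,r/2)$ to get $\mu_i(B(z,r/2))^2\le\int\mu_i(B(x,r))\,d\mu_i(x)$, and then deduce the second inequality pointwise. The only cosmetic difference is that the paper selects the single ball of largest measure rather than summing over the whole cover, which yields the same constant.
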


Note that when $\mu_1=\mu_2$, the second statement is the
conclusion of \cite[Lemma~14]{BarLiaRou19}.

\begin{proof}
  Since $X\subset \R^n$, we can cover $B(y, r)$ by $K_0$ balls
  of radius $r/2$.  Let $B(z, r/2)$ be the ball of largest $\mu_i$-measure, then
  $\mu_i (B(z, r/2)) \ge \mu_i (B(y, r))/K_0$.  Then
  \begin{align*}
    \int\mu_i (B(x, r)) \, d\mu_i(x)
    & \ge \int_{B(z, r/2)}\mu_i (B(x, r)) \, d\mu_i(x) \ge
      \int_{B(z, r/2)}\mu_i (B(z, r/2)) \, d\mu_i(x)\\ 
    & = \mu_i (B(z, r/2))^2 \ge \frac1{K_0^2} \mu_i (B(y,
      r))^2, 
  \end{align*}
  so setting $K=K_0$ the first statement of the lemma is proved.
  The second statement is then immediate.
\end{proof}

\begin{lemma}
  Let $(X,\mu)$ denote a probability space where $X\subset
  \R$. For any $r>0$, $\psi_r \colon X \to \R$ given by
  $y\mapsto \mu_1(B(y,r))$ is BV with total variation bounded
  above by 2.
  \label{lem:BV}
\end{lemma}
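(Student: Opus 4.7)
The plan is to reduce everything to the fact that a cumulative distribution function of a probability measure on $\R$ is monotone with total variation at most $1$, and then use the general fact that $\mathrm{Var}(f-g) \le \mathrm{Var}(f) + \mathrm{Var}(g)$.

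First, I would introduce the auxiliary function $F \colon \R \to [0,1]$ given by $F(t) := \mu((-\infty, t))$ (or one of the analogous variants depending on whether $B(y,r)$ is taken open or closed; the argument is insensitive to this choice). Then for every $y \in X$ we have
\[
  \psi_r(y) = \mu(B(y,r)) = F(y+r) - F(y-r),
\]
so $\psi_r$ is the difference of two translated copies of a monotone non-decreasing function bounded between $0$ and $1$.

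Next, for any partition $x_0 < x_1 < \cdots < x_n$ of points in $X$, I would estimate
\[
  \sum_{i=1}^n |\psi_r(x_i) - \psi_r(x_{i-1})|
  \le \sum_{i=1}^n |F(x_i+r) - F(x_{i-1}+r)| + \sum_{i=1}^n |F(x_i-r) - F(x_{i-1}-r)|.
\]
By monotonicity of $F$, each of the two sums on the right is a telescoping sum bounded by $F(x_n \pm r) - F(x_0 \pm r) \le 1$, since $F$ takes values in $[0,1]$. Equivalently, the two sums represent the $\mu$-measure of the disjoint unions $\bigsqcup_i [x_{i-1}+r, x_i+r)$ and $\bigsqcup_i [x_{i-1}-r, x_i-r)$, each of which has $\mu$-measure at most $1$ because $\mu$ is a probability measure. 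Adding, we obtain the bound $2$. Taking the supremum over all partitions gives $\mathrm{Var}(\psi_r) \le 2$, which concludes the proof.

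There is no real obstacle here; the only minor care needed is to make sure the distributional identity $\psi_r(y) = F(y+r) - F(y-r)$ is written with the correct convention (open vs.\ closed ball), but this does not affect the final bound of $2$.
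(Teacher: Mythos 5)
Your proof is correct and takes essentially the same route as the paper: both write $\psi_r(y)=\mu((-\infty,y+r))-\mu((-\infty,y-r])$ as a difference of two monotone functions increasing from $0$ to $1$ and conclude that the total variation is at most $1+1=2$. Your version merely spells out the partition/telescoping argument that the paper leaves as ``it follows immediately.''
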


\begin{proof}
  We have that
  $\mu (B(y,r)) = \mu ((-\infty,y + r)) - \mu ((-\infty,
  y-r])$. Hence, the function $\psi_r$ is a difference of two
  increasing functions, both increasing from $0$ to $1$. It
  follows immediately that the total variation of $\psi_r$ is at
  most $1 + 1 = 2$.
\end{proof}

\begin{remark}
  An analogue of this lemma in higher dimension for
  $|\cdot|_{\alpha}$ also holds. Indeed, a stronger statement is
  proved in \cite[Section 5]{BarLiaRou19}.
  \label{rmk:SauBV}
\end{remark}

\begin{proof}[Proof of Theorem~\ref{the:measurezero}(\ref{infzero})] 
  Given a sequence $(r_n)_n$, let
  \begin{equation}\label{eq:S_n}
  	S_n(x,y)=\sum_{0\leq i,j< n} \1_{B(T_2^j y,r_n)}(T_1^i x).
  \end{equation}
  Note that $S_n(x,y)= 0$ implies $(x,y)\notin E_n$. We argue
  that the result follows if $\E(S_n)\to 0$. Indeed, since
  $S_n\geq 0$, there exists a subsequence $(n_k)_k$ s.t.\
  $S_{n_k}(x,y)\to 0$ a.s. Since $S_n(x,y)$ is integer-valued
  this means that a.s.\ $S_{n_k}(x,y)=0$ for all sufficiently
  large $k$. In particular, this means that
  $(x,y)\notin \liminf_{n} E_n$. Hence we compute the expectation
  of $S_n$. By the $T_1$-invariance of $\mu_1$ and
  $T_2$-invariance of $\mu_2$,
  \begin{align*}
  	\mathbbm{E}(S_n)&=\sum_{0\leq i,j< n} \iint \1_{B(T_2^j
  		y,r_n)}(T_1^i x) \,d\mu_1(x) d\mu_2(y)\\
  	&=\sum_{0\leq i,j< n} \int \mu_1 (B(y,r_n))\,d\mu_2(y)=n^2 \int \mu_1 (B(y,r_n))\,d\mu_2(y)\to 0.
  \end{align*}
  This concludes the proof.
  \end{proof}
  
 \begin{proof}[Proof of Theorem~\ref{the:measurezero}(\ref{supzero})] Given a sequence $(r_n)_n$ which decreases such that also $n\int\mu_1(B(y, r_n)) \, d\mu_2(y)$ is decreasing, let
  \begin{align*}
  	\tilde{S}_n (x, y)&:=\sum_{i, j\in [0, 2^{n+1})} \1_{B(T_2^jy, r_{2^n})}(T_1^i x),\\
  	\tilde{S} (x,y)&:=\sum_{n=0}^\infty \tilde{S}_n(x,y).
  \end{align*}
  We argue that the result follows if
  $\E(\tilde{S})<\infty$. Indeed, then $\tilde{S}(x,y)<\infty$
  a.s., which in turn means that a.s.\ $\tilde{S}_n(x,y)=0$ for all
  but finitely many $n\in \N$ since the $\tilde{S}_n$ are
  integer-valued. The definition of the $\tilde{S}_n$ along with
  the assumption $r_n\geq r_{n+1}$ means that
  $\tilde{S}_n(x,y)=0$ implies $(x,y)\notin E_{2^n + l}$ for all
  $l\in \{0,1,\dots, 2^{n+1}-2^n\}$. So if
  a.s.\ $\tilde{S}_n(x,y)=0$ for all sufficiently large $n$, then a.s.\ $(x,y)\notin \limsup_n E_n$.

 Hence we compute the expectation of $\tilde{S}$. Since $\tilde{S}_n\geq 0$ we have $\E(\tilde{S})=\sum_{n=1}^{\infty}\E(\tilde{S_n})$. By the $T_1$-invariance of $\mu_1$ and $T_2$-invariance of $\mu_2$,
  \begin{align*}
    \E (\tilde{S}) & = \sum_{n=0}^\infty \sum_{i, j\in [0, 2^{n+1})}\int\int
             \1_{B(T_2^jy, r_{2^n})}(T_1^ix) \,
             d\mu_1(x)d\mu_2(y)\\
           &= \sum_{n=0}^\infty\sum_{i, j\in [0,
             2^{n+1})}\int\mu_1 (B(T_2^jy,
             r_{2^n})) \, d\mu_2(y)\\
             &= \sum_{n=0}^\infty 2^{2(n+1)}\int
             \mu_1 (B(y, r_{2^n})) \, d\mu_2(y).
  \end{align*}
  Hence the assumption that
  $\sum_{n=0}^\infty 2^{2n}\int \mu_1 (B(y, r_{2^n})) \, d\mu_2(y)
  < \infty$ implies that $\E(\tilde{S})$ is bounded.
  Since $n\int\mu_1(B(y, r_n)) \, d\mu_2(y)$ is assumed decreasing, Lemma~\ref{lem:CauchyC} gives that this is equivalent to the
  assumption
  \[
    \sum_{n=0}^\infty n\int\mu_1(B(y, r_n)) \, d\mu_2(y) <\infty.
  \]
  This concludes the proof.
  \end{proof}
  
  \begin{proof}[Proof of Theorem~\ref{thm:supinf2}(\ref{thm:supinf2-1})]
  Suppose, without loss of
  generality, that the exponential mixing constants are the same
  for both systems. 
  
  Given a decreasing sequence $(r_n)_n$, for $x, y\in X$,
  define
  \[
    \hat S_n(x, y):= \sum_{i, j\in [0, 2^{n})} \1_{B(T_2^jy,
      r_{2^{n+1}})}(T_1^ix).
  \]
  The motivation for defining $\hat{S}_n$ along the subsequence $2^n$ will become clear later in the proof. Note that $\hat{S}_n$ is constructed so that if for some $n\in\N$ we have
  $\hat{S}_{n} (x,y) \geq 1$, then since $(r_n)_n$ is decreasing, also $\sum_{i,j\in [0,l)} \1_{B(T_2^jy, r_{l})}(T_1^ix)\geq 1$ for all $l\in [ 2^n, 2^{n+1}]$. Hence if $\hat{S}_n(x,y)\geq 1$ for all large $n$, then
  $(x,y) \in \liminf_n E_n$ and if $\hat{S}_n(x,y)\geq 1$ for all large $n$ is true a.s., then $(\mu_1\times\mu_2)(\liminf_n E_n)=1$. We argue that the result follows if $\E(\hat S_n)\to\infty$ and
  \begin{equation}\label{eq:SummableVariance}
  	\sum_{n=1}^\infty \E \left(\frac{\hat S_n}{\E(\hat S_n)}- 1
  	\right)^2 <\infty.
  \end{equation}
  The argument is standard and well-known but we include it for completeness. By the Markov inequality, since $\left(\frac{\hat S_n}{\E(\hat S_n)}- 1
  \right)^2\geq 0$, we get that for any $\delta>0$, 
  \begin{equation*}
  (\mu_1\times \mu_2)\left(\left(\frac{\hat S_n(x,y)}{\E(\hat S_n)}- 1\right)^2\geq \delta\right)\leq \frac{1}{\delta} \E \left(\frac{\hat S_n}{\E(\hat S_n)}- 1\right)^2.
  \end{equation*}
By \eqref{eq:SummableVariance}, the right hand side is summable and hence the left hand side is as well By the Borel--Cantelli Lemma, along with the fact that $\delta>0$ is arbitrary, we conclude that a.s.\ $\bigl| \frac{\hat S_n(x,y)}{\E(\hat S_n)}- 1\bigr|\geq \delta$ is true for at most finitely many $n\in\N$. In other words, a.s.\ for all $n$ sufficiently large we have $\frac{\hat S_n(x,y)}{\E(\hat S_n)}\in (1-\delta, 1+\delta)$. Since $\E(\hat S_n)\to\infty$ also a.s.\ $\hat S_n(x,y)\to\infty$ and in particular a.s.\ $\hat S_n(x,y)\geq 1$ for all $n$ sufficiently large which concludes the argument.  

We now prove $\E(\hat S_n) \to\infty$ and \eqref{eq:SummableVariance}. Using $T_1$-invariance of $\mu_1$ and $T_2$-invariance of $\mu_2$ we first compute
 \[\E(\hat S_n)= 2^{2n}\int\mu_1(B(y, r_{2^{n+1}})) \, d\mu_2(y)\to\infty. 
 \] 
by the assumption on the lower bound of shrinking rate of the integral.  
  
Next, note that the summands in \eqref{eq:SummableVariance} are
equal to $\frac{\E(\hat S_n^2)-\E(\hat S_n)^2}{\E(\hat
  S_n)^2}$. Since we already have $\E(\hat S_n)$ we proceed to
estimate $\E(\hat S_n^2)$. Because
  \[
    \E(\hat S_n^2) = \sum_{i_1, i_2, j_1, j_2 \in [0, 2^{n})}
    \int\int\1_{B(T_2^{j_1}y, r_{2^{n+1}})} (T_1^{i_1}x)\cdot
    \1_{B(T_2^{j_2}y, r_{2^{n+1}})} (T_1^{i_2}x) \,
    d\mu_1(x)d\mu_2(y),
  \]
  it suffices to assume that $i_1<i_2$, $j_1<j_2$ since for the three other combinations of inequalities, the upcoming estimates will give the same bound. For this case of indices we then
  split the sum into the four pairs arising from the cases where
  for some $c>0$ (to be chosen later) $i_2-i_1\le cn$,
  $i_2-i_1> cn$, $j_2-j_1\le cn$, $j_2-j_1> cn$.  
  
  For $i_2-i_1> cn$ and $j_2-j_1> cn$, i.e.\ \emph{the totally separated case},
  \begin{align*}
    & \sum_{\substack{i_1, i_2\in [0, 2^{n})\\ i_2-i_1> cn}} \
    \sum_{\substack{j_1, j_2\in [0, 2^{n})\\  j_2-j_1> cn}}
    \int\int\1_{B(T_2^{j_1}y, r_{2^{n+1}})} (T_1^{i_1}x)
    \1_{B(T_2^{j_2}y, r_{2^{n+1}})}(T_1^{i_2}x) \,
    d\mu_1(x)d\mu_2(y)\\ 
    &= \sum_{\substack{i_1, i_2\in [0, 2^{n})\\ i_2-i_1> cn}} \
    \sum_{\substack{j_1, j_2\in [0, 2^{n})\\  j_2-j_1> cn}}
    \int\int\1_{B(T_2^{j_1}y, r_{2^{n+1}})} (x)
    \1_{B(T_2^{j_2}y, r_{2^{n+1}})}(T_1^{i_2-i_1}x) \,
    d\mu_1(x)d\mu_2(y)\\ 
    &  \le \sum_{\substack{i_1, i_2\in [0, 2^{n})\\ i_2-i_1>
    cn}} \ \sum_{\substack{j_1, j_2\in [0, 2^{n})\\   j_2-j_1>
    cn}}  \biggl( \int\mu_1 (B(y, r_{2^{n+1}}))
    \mu_1 (B(T_2^{j_2-j_1}y, r_{2^{n+1}})) \, d\mu_2(y) + \\
    & \hspace{10.5cm} + 2 C e^{- n \theta c}\biggr)\\
    &   \le \sum_{\substack{i_1, i_2\in [0, 2^{n})\\
    i_2-i_1> cn}} \ \sum_{\substack{j_1, j_2\in [0, 2^{n})\\
    j_2-j_1> cn}}  \biggl( \biggl(\int\mu_1 (B(y, r_{2^{n+1}}))
    \, d\mu_2(y) \biggr)^2 + 4Ce^{-n\theta c}\biggr)\\
    & \le 2^{4n}\biggl( \biggl(\int\mu_1 (B(y,
      r_{2^{n+1}})) \, d\mu_2(y)\biggr)^2+ 4Ce^{-n\theta
      c}\biggr) = \E(\hat S_n)^2+ 2^{4n+2}Ce^{-n\theta c},
  \end{align*}
  where the first two inequalities follow from the mixing
  properties of the two systems, and in the second we use
  Lemma~\ref{lem:BV} or Remark~\ref{rmk:SauBV}.

  For $i_2-i_1\le cn$ and $j_2-j_1\le cn$, i.e.\ \emph{the totally non-separated case},
  \begin{align*}
    & \sum_{\substack{i_1, i_2\in [0, 2^{n})\\ 0\le i_2-i_1\le cn}}
    \ \sum_{\substack{j_1, j_2\in [0, 2^{n})\\  0\le j_2-j_1\le
    cn}} \int\int\1_{B(T_2^{j_1}y, r_{2^{n+1}})} (T_1^{i_1}x) \cdot
    \1_{B(T_2^{j_2}y, r_{2^{n+1}})} (T_1^{i_2}x) \,
    d\mu_1(x)d\mu_2(y)\\
    &\hspace{4.5cm} \le  \sum_{\substack{i_1, i_2\in [0, 2^{n})\\
    0\le i_2-i_1\le cn}} \ \sum_{\substack{j_1, j_2\in [0,
    2^{n})\\
    0\le j_2-j_1\le cn}} \int \mu_1 (B(T_2^{j_1}y, r_{2^{n+1}}))
    \, d\mu_2(y)\\
    &\hspace{4.5cm} =\quad (cn)^2 2^{2n}\int \mu_1 (B(y,
      r_{2^{n+1}})) \, d\mu_2(y)=  (cn)^2 \E(\hat S_n).
  \end{align*}

  For $i_2-i_1> cn$ and $j_2-j_1\le cn$, i.e.\ \emph{a half-separated case} (we can swap $T_1$ and $T_2$ to
  get the other half-separated case),
  \begin{align*}
    & \sum_{\substack{i_1, i_2\in [0, 2^{n})\\ i_2-i_1> cn}} \
    \sum_{\substack{j_1, j_2\in [0, 2^{n})\\  0\le j_2-j_1\le
    cn}} \int \int \1_{B(T_2^{j_1}y, r_{2^{n+1}})}
    (T_1^{i_1}x)\cdot \1_{B(T_2^{j_2}y, r_{2^{n+1}})}
    (T_1^{i_2}x) \, d\mu_1(x)d\mu_2(y)\\
    & \quad = \sum_{\substack{i_1, i_2, j_1, j_2 \in [0, 2^{n})\\
    i_2-i_1> cn\\ 0\le j_2-j_1\le cn}} \int\int\1_{B(T_2^{j_1}y,
    r_{2^{n+1}})}(x)\cdot \1_{B(T_2^{j_2}y,
    r_{2^{n+1}})}(T_1^{i_2-i_1}x) \, d\mu_1(x)d\mu_2(y)\\ 
    & \quad \le \sum_{\substack{i_1, i_2\in [0, 2^{n})\\ i_2-i_1>
    cn}} \ \sum_{\substack{j_1, j_2\in [0, 2^{n})\\  0\le
    j_2-j_1\le cn}}  \biggl( \int\mu_1 (B(T_2^{j_1}y,
    r_{2^{n+1}})) \mu_1 (B(T_2^{j_2}y,
    r_{2^{n+1}})) \, d\mu_2(y) + \\
    & \hspace{10.5cm} + 2 C e^{-n\theta c}\biggr)\\
    &\quad \le   \sum_{\substack{i_1, i_2\in [0, 2^{n})\\ i_2-i_1>
    cn}}  \ \sum_{\substack{j_1, j_2\in [0, 2^{n})\\  0\le
    j_2-j_1\le cn}} \biggl(\int \mu_1 (B (y, r_{2^{n+1}}))^2 \,
    d\mu_2(y) + 2 C e^{-n\theta c}\biggr)\\ 
&\quad \le cn2^{3n} \biggl(\int \mu_1 (B(y,
r_{2^{n+1}}))^2 \, d\mu_2(y) + 2 C e^{-n\theta c} \biggr),  
  \end{align*}
  where in the penultimate line we use the H\"older inequality and
  $T_2$-invariance of $\mu_2$.

  As mentioned earlier, we aim to show that
  $\E(\hat S_n^2) - \E(\hat S_n)^2$, when divided by
  $ \E(\hat S_n)^2$ are summable. Note that the definition of $\hat{S}_n$ along the subsequence $2^n$ becomes essential at this stage in the proof. Collecting our estimates from above we see that if we ignore multiplicative constants, the $\frac{\E(\hat S_n^2)-\E(\hat S_n)^2}{\E(\hat S_n)^2}$ are bounded by
  \begin{align*}
  \frac{2^{4n}e^{-n\theta c}}{\E(\hat S_n)^2}+\frac{n^2 \E(\hat S_n)}{\E(\hat S_n)^2}+ \frac{n2^{3n}\int \mu_1 (B(y, r_{2^{n+1}}))^2 \, d\mu_2(y)}{\E(\hat S_n)^2} + \frac{n2^{3n}e^{-n\theta c}}{\E(\hat S_n)^2}.
  \end{align*}
  The fourth summand is dominated by the first so it suffices to demonstrate that the first three are summable. For the first summand we have,
  \begin{equation*}
  	\frac{2^{4n}e^{-n\theta c}}{\E(\hat S_n)^2}=\frac{e^{-n\theta c}}{\bigl( \int\mu_1(B(y, r_{2^{n+1}})) \, d\mu_2(y) \bigr)^2}\leq \frac{2^{4n+4}e^{-n\theta c}}{(n^3(\log\log 2+ \log
  		n)^{1+\eps} (\log 2)^3 )^2}.
  \end{equation*}
  by the assumption that $\int\mu_1(B(y, r_n)) \, d\mu_2(y) \ge \frac{(\log n)^3(\log\log n)^{1+\eps}}{n^2}$. So choosing $c\ge 4\log 2/\theta$, this is summable. 
  
  Using the same assumption on the integral we get for the second summand that
  \begin{equation*}
  	\frac{n^2 \E(\hat S_n)}{\E(\hat S_n)^2}  =\frac{n^2}{ 2^{2n} \int\mu_1(B(y,r_{2^{n+1}})) \, d\mu_2(y)}\leq \frac{1}{n(\log\log 2+ \log n)^{1+\eps} (\log 2)^3 },
  \end{equation*}
  which is summable.
  
  For the third summand we apply Lemma~\ref{lem:tight} and the
  assumption \eqref{eq:extraconditionA} for $i = 1$ to obtain
  \begin{align*}
  	&\frac{n2^{3n}\int \mu_1 (B(y, r_{2^{n+1}}))^2 \, d\mu_2(y)}{\E(\hat S_n)^2} \\
  	&\leq  \frac{n2^{3n} \bigl( \int\mu_1(B(y, r_{2^{n+1}})) \,
  		d\mu_1(y) \bigr)^{\frac12} \int\mu_1(B(y, r_{2^{n+1}})) \,
  		d\mu_2(y)}{2^{4n} \biggl(\int\mu_1(B(y, r_{2^{n+1}})) \,
  		d\mu_2(y)\biggr)^2} \\
  	&= \frac{n \bigl( \int\mu_1(B(y, r_{2^{n+1}})) \, d\mu_1(y)
  		\bigr)^{\frac12}}{2^n \int\mu_1(B(y, r_{2^{n+1}})) \,
  		d\mu_2(y)}.\\
  		&\leq \frac{2}{n(\log\log 2 + \log n)^{1+\eps}(\log 2)^2}
  \end{align*}  
  which is also summable.
  
  As mentioned above, the other half-separated case is obtained
  by switching $T_1$ and $T_2$ and applying
  \eqref{eq:extraconditionA} for $i = 2$ instead of $i = 1$.
  This concludes the proof of part (\ref{thm:supinf2-1}).
\end{proof}

  \begin{proof}[Proof of Theorem~\ref{thm:supinf2}(\ref{thm:supinf2-2})] Given a sequence $(r_n)_n$, let again
  \begin{equation*}
  	S_n(x,y)=\sum_{0\leq i,j< n} \1_{B(T_2^j y,r_n)}(T_1^i x).
  \end{equation*}
  Note that by the assumption on the shrinking rate of the integral,
  \begin{equation*}
  	\E(S_n)=n^2 \int \mu_1 (B(y,r_n))\,d\mu_2(y)\geq (\log n)^2 h(n)\to\infty.
  \end{equation*}
	We argue that the result follows if $\E \Bigl(\frac{ S_n}{\E(S_n)}- 1
	\Bigr)^2\to 0$. Indeed, then there exists a subsequence $(n_k)_k$ s.t.\ $\Bigl(\frac{ S_{n_k}(x,y)}{\E(S_{n_k})}- 1
	\Bigr)^2\to 0\Rightarrow \frac{ S_{n_k}(x,y)}{\E(S_{n_k})}\to 1$ a.s. Since the denominator goes to infinity, so must $S_{n_k}(x,y)$ for $k\to\infty$ a.s. This implies that $(x,y)\in\limsup_n E_n$ a.s.
	
	To estimate the quantity $\E \Bigl(\frac{ S_n}{\E(S_n)}- 1 \Bigr)^2$ we follow the steps of part (\ref{thm:supinf2-1}). The difference to part (\ref{thm:supinf2-1}) is limited to the fact that we sum over the indices in $[0,n)$ instead of $[0,2^n)$ and the threshold for indices to be \enquote{separated} is given by $c\log n$ instead of $cn$. Repeating the same calculations as in part (\ref{thm:supinf2-1}), we obtain for part (\ref{thm:supinf2-2}) the estimate (again ignoring multiplicative constants)
	\begin{equation*}
		\frac{n^4 e^{-c\theta \log n}}{\E(S_n)^2} + \frac{(\log n)^2 \E(S_n)}{\E(S_n)^2} + \frac{ (\log n)n^3\int \mu_1 (B(y, r_n))^2 \, d\mu_2(y)}{\E(S_n)^2} + \frac{(\log n)n^3e^{-c\theta \log n} }{\E(S_n)^2} 
	\end{equation*} 
	on the quantity of interest. Again, the fourth summand is dominated by the first so it suffices to demonstrate that the first three summands vanish. 
	
	For the first summand we get,
	\begin{align*}
          \frac{n^4 e^{-c\theta \log n}}{\E(S_n)^2} =
          \frac{1}{n^{c\theta}\bigl (\int \mu_1(B(y,r_n))\,
          d\mu_2(y)\bigr )^2}\leq \frac{n^4}{n^{c\theta}(\log
          n)^4(h(n))^2}
	\end{align*}
	by the assumption that
        $\int\mu_1(B(y, r_n)) \, d\mu_2(y) \ge \frac{(\log n)^2
          h(n)}{n^2}$. So choosing $c\geq \frac{4}{\theta}$, this
        goes to zero.
	
	Using the same assumption on the integral we get for the
        second summand that
	\begin{equation*}
          \frac{(\log n)^2 \E(S_n)}{\E(S_n)^2} = \frac{(\log
            n)^2}{ n^2 \int \mu_1(B(y,r_n)) \, d\mu_2(y)} \leq
          \frac{1}{h(n)},
	\end{equation*}
	which goes to zero.
	
	For the third summand we apply Lemma~\ref{lem:tight} and
        the assumption \eqref{eq:extraconditionB} to obtain
	\begin{equation*}
          \frac{ (\log n)n^3\int \mu_1 (B(y, r_n))^2 \,
            d\mu_2(y)}{\E(S_n)^2} \leq \frac{(\log n)n^3\bigl
            (\int \mu_1 (B(y, r_n))^2 \, d\mu_1(y)\bigr
            )^{\frac12}}{n^4 \int \mu_1 (B(y, r_n))^2 \,
            d\mu_2(y)} \leq \frac{1}{h(n)},
	\end{equation*}
	which goes to zero. 
	
	This concludes the proof of part (\ref{thm:supinf2-2}).
\end{proof}

\subsection{Proofs for the doubling map}

For the proofs relating to the doubling map we will use the
notation $\mu^2:=(\mu\times\mu)$ to simplify expressions.

\begin{proof}[Proof of Theorem~\ref{thm:2xmod1Dicho}]
   
  The zero measure case follows from
  Theorem~\ref{the:measurezero}(\ref{supzero}).  We now
  consider the measure 1 case.

  Clearly,
  \begin{align*}
    G_n&:=\bigcup_{0\leq i\leq j< n} \{(x,y): d(T^i x,T^j
         y)\leq r_n \}\\ 
       &\subset \bigcup_{0\leq i,j< n} \{(x,y): d(T^i
         x,T^j y)\leq r_n \}=E_n, 
  \end{align*}
  implying that $\limsup_{n}G_n\subset \limsup_{n}E_n$. In the
  following we will construct a subset of $\limsup_{n}G_n$ which
  is easier to work with. Consider now
  \begin{equation*}
    H_n:=\bigcup_{0\leq i\leq j< n} \{(x,y): d(T^i x,T^j
    y)\leq r_j \}.
  \end{equation*}
  Write
  \begin{align*}
    &C_{i,j}:=\{(x,y): d(T^i x,T^j y)\leq r_j\}\\
    &B_{i,j,n}:=\{(x,y): d(T^i x,T^j y)\leq r_n\}
  \end{align*}
  such that
  \begin{align*}
    &H_n=\bigcup_{0\leq i\leq j< n} C_{i,j}\\
    &G_n=\bigcup_{0\leq i\leq j< n} B_{i,j,n}.
  \end{align*}
  For a given $j\leq n$, $C_{i,j}=B_{i,j,j}\subset G_j$. Hence
  \begin{equation*}
    H_n\subset \bigcup_{j=1}^n G_j. 
  \end{equation*}
  We next want to restructure
  $J_n:=\{(i,j): 1\leq i\leq j\leq n\}$ from being an array to a
  sequence. We do this by introducing the ordering $\preceq$
  whereby $(i_1,j_1)\preceq (i_2,j_2)$ if $j_2>j_1$ or if
  $j_2=j_1$ and $i_2\geq i_1$. Using this ordering we may
  reenumerate the $C_{i,j}$'s chronologically from 1 up to
  $|J_n|=\frac{n(n+1)}{2}$, i.e.\ for each $(i,j)\in J_n$,
  $C_{i,j}=:A_l$ for some $l\in
  \{1,\dots,\frac{n(n+1)}{2}\}$. Note that this restructuring may
  equally well be done for the infinite array
  $J:=\{(i,j): 1\leq i\leq j\}$ giving rise to an infinite
  sequence. This restructuring allows us to consider the
  $\limsup$ set of the $C_{i,j}=A_l$'s, i.e.
  \begin{equation*}
    \limsup_{l}A_l=\bigcap_{l=1}^{\infty}\bigcup_{k=l}^{\infty}
    A_k.
  \end{equation*}
  Suppose $(x,y)\in \limsup_{l} A_l$. This means that
  $(x,y)\in A_l$ for infinitely many $l$, or similarly,
  $(x,y)\in C_{i,j}$ for infinitely many distinct pairs
  $(i,j)\in J$. In particular, since there are for each $j$ only
  finitely many options for $i$, there exists pairs $(i,j)\in J$
  with arbitrarily large value of $j$ for which
  $(x,y)\in C_{i,j}$ . Since $C_{i,j}\subset G_j$ we have that
  $(x,y)\in G_j$ for infinitely many $j$. In other words,
  $(x,y)\in \limsup_{n} G_n$.
		
  We conclude that
  \begin{equation*}
    \limsup_{l}A_l\subset \limsup_{n} G_n\subset
    \limsup_{n} E_n  
  \end{equation*}
  and consequently
  \begin{equation*}
    \mu^2(\limsup_{l} A_l) \leq \mu^2\left(\limsup_{n} E_n\right).  
  \end{equation*}
  We will prove that $\mu^2\left(\limsup_{l} A_l\right) =1$, thereby
  concluding $\mu^2\left(\limsup_{n} E_n\right)=1$.
		
  Assume that 
  \begin{equation}\label{divergence}
    \sum_{n=1}^{\infty} nr_n=\infty.
  \end{equation}
  Extending $\1_{B(0,r_j)}$ to a periodic function on $\R$, for
  any given $l\in\N$ we have
  \begin{equation*}
    \mu^2(A_l)=\mu^2(C_{i,j})=\int \1_{C_{i,j}}d\mu^2=\iint
    \1_{B(0,r_j)}(2^i x-2^j y)\,dx\,dy
  \end{equation*}
  for some $(i,j)\in J$. We may write the periodic extension of
  $\1_{B(0,r_j)}$ via its Fourier series,
  \begin{equation*}
    \1_{B(0,r_j)}(z)=\sum_{k\in\Z} c_{j,k}e^{2\pi i k z}
  \end{equation*}
  which means that 
  \[
    \mu^2(A_l) = \sum_{k\in\Z} c_{j,k}\iint e^{2\pi i k (2^i x- 2^j
                y)} \,d x\,dy=c_{j,0}=\int
                \1_{B(0,r_j)}d\mu^2=2r_j 
  \]
  since the integrals are zero unless $k=0$ in which case it is
  1. From this we can compute the partial sums of the
  $\mu^2(A_l)$, namely, we write any given $M\in \N$ as $M=k_n+a$
  for some $n\in\N$ where $k_n=\sum_{i=1}^n i$ and $1\leq a<n+1$,
  thereby obtaining
  \begin{align*}
    \sum_{l=1}^{M}\mu^2(A_l)
    &=\sum_{l=1}^{k_n}\mu^2(A_l)+\sum_{l=k_n+1}^{M}\mu^2(A_l)
      =\sum_{{1\leq i\leq j\leq n}}
      \mu^2(C_{i,j})+\sum_{i=1}^{a}\mu^2(C_{i,n+1})\\ 
    &=\sum_{j=1}^{n} j2r_j+
      a2r_{n+1}\stackrel{n\to\infty}{\longrightarrow}\infty 
  \end{align*}
  where the divergence holds by assumption
  \eqref{divergence}. Clearly $M\to\infty$ for $n\to\infty$
  implying the divergence of the first sum as well. We aim to
  employ the following consequence of the Erd\H{o}s--R\'{e}nyi
  version of the Borel--Cantelli lemma \cite{ErdRen59} .
  \begin{lemma}\label{Chung-Erdös}
    Let $(Y,\mathcal{B}, \nu)$ denote a probability space. If
    $A_l\in \mathcal{B}$ are sets such that
    \begin{equation}\label{eq:divergentsum}
      \sum_{l=1}^{\infty} \nu(A_l)=\infty
    \end{equation}
    and
    \begin{equation}
      \label{eq:harman}
      \liminf_{N \to \infty}
      \frac{ \displaystyle \sum_{1 \leq l_1 < l_2 \leq N} \bigl(
        \nu (A_{l_1} \cap A_{l_2}) - \nu (A_{l_1}) \nu (A_{l_2})
        \bigr)}{\displaystyle \Biggl( \sum_{l=1}^N \nu (A_l)
        \Biggr)^2} \leq 0,
    \end{equation}
    then $\nu (\limsup_l A_l) = 1$.
  \end{lemma}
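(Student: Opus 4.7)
The plan is to apply the second-moment method to the counting functions $S_N := \sum_{l=1}^N \1_{A_l}$, which is the standard Chung--Erd\H{o}s route. Set $\alpha_N := \E[S_N] = \sum_{l=1}^N \nu(A_l)$, which tends to $\infty$ by \eqref{eq:divergentsum}. The Paley--Zygmund inequality (equivalently, Cauchy--Schwarz applied to $S_N = S_N \1_{\{S_N > 0\}}$) gives
\[
\nu(S_N > 0) \ge \frac{\alpha_N^2}{\E[S_N^2]}.
\]

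Next I would expand
\[
\E[S_N^2] = \sum_{l=1}^N \nu(A_l) + 2 \sum_{1 \le l_1 < l_2 \le N} \nu(A_{l_1} \cap A_{l_2}),
\]
and subtract $\alpha_N^2 = \sum_l \nu(A_l)^2 + 2 \sum_{l_1 < l_2} \nu(A_{l_1})\nu(A_{l_2})$ to obtain
\[
\E[S_N^2] - \alpha_N^2 = \sum_{l=1}^N \nu(A_l)\bigl(1 - \nu(A_l)\bigr) + 2 \sum_{l_1 < l_2} \bigl( \nu(A_{l_1} \cap A_{l_2}) - \nu(A_{l_1})\nu(A_{l_2}) \bigr).
\]
The diagonal sum is bounded by $\alpha_N$, hence $o(\alpha_N^2)$ since $\alpha_N \to \infty$; the off-diagonal sum divided by $\alpha_N^2$ has liminf $\le 0$ by \eqref{eq:harman}. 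Consequently $\liminf_N \E[S_N^2]/\alpha_N^2 \le 1$, so $\limsup_N \nu(S_N > 0) \ge 1$, i.e.\ $\nu\bigl(\bigcup_{l=1}^\infty A_l\bigr) = 1$.

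Finally, to upgrade ``at least one $A_l$ occurs'' to ``infinitely many $A_l$ occur'', I would apply the same argument to the tail family $(A_l)_{l \ge L}$ for each fixed $L \in \N$. The divergence of $\sum_{l \ge L} \nu(A_l)$ is preserved since only finitely many terms are removed, and the liminf in \eqref{eq:harman} for the tail is also $\le 0$ since only finitely many pairs are deleted and the denominator still tends to infinity. Hence $\nu\bigl(\bigcup_{l \ge L} A_l\bigr) = 1$ for every $L$, and since these sets are decreasing in $L$, continuity of measure gives
\[
\nu(\limsup_l A_l) = \nu\Bigl( \bigcap_{L} \bigcup_{l \ge L} A_l \Bigr) = \lim_{L\to\infty} \nu\Bigl( \bigcup_{l \ge L} A_l \Bigr) = 1.
\]

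There is no essential obstacle here; the only delicate bookkeeping is verifying that the diagonal contribution to the variance is negligible compared to $\alpha_N^2$ and that hypothesis \eqref{eq:harman} passes to tail sequences, both of which are immediate. The crux of the argument is simply the Paley--Zygmund lower bound combined with the covariance decomposition, which is precisely what \eqref{eq:harman} is designed to control.
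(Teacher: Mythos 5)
The paper does not actually prove this lemma: it is invoked as a known ``consequence of the Erd\H{o}s--R\'enyi version of the Borel--Cantelli lemma'' with a citation to Erd\H{o}s--R\'enyi, so your proposal supplies an argument the authors leave to the literature. What you write is the standard second-moment (Paley--Zygmund / Kochen--Stone) proof, and it is correct in substance: the expansion of $\E[S_N^2]-\alpha_N^2$ into a diagonal part bounded by $\alpha_N=o(\alpha_N^2)$ plus twice the covariance sum, combined with \eqref{eq:harman}, does give $\liminf_N \E[S_N^2]/\alpha_N^2\le 1$, and since $\{S_N>0\}$ increases to $\bigcup_l A_l$ the Paley--Zygmund bound along the extracting subsequence yields $\nu(\bigcup_l A_l)=1$; the reduction of $\limsup_l A_l$ to the tails $\bigcup_{l\ge L}A_l$ is the right final step.

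One justification in the tail step is stated incorrectly, though the conclusion survives. Passing from the full family to $(A_l)_{l\ge L}$ does \emph{not} delete only finitely many pairs from the numerator of \eqref{eq:harman}: for each $N$ one removes all pairs with $l_1<L\le l_2\le N$ (plus the pairs with $l_2<L$), and the number of these grows linearly in $N$. The correct argument is that each removed term satisfies $|\nu(A_{l_1}\cap A_{l_2})-\nu(A_{l_1})\nu(A_{l_2})|\le \nu(A_{l_2})$, so the total removed contribution is at most $(L-1)\sum_{l_2\le N}\nu(A_{l_2})+\alpha_{L-1}^2=O(L\,\alpha_N)=o(\alpha_N^2)$, while the denominator changes from $\alpha_N^2$ to $(\alpha_N-\alpha_{L-1})^2\sim\alpha_N^2$; hence the liminf condition indeed persists for every tail. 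With that one-line repair your proof is complete and matches the intended (cited) argument.
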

  Since we already showed that property \eqref{eq:divergentsum}
  is satisfied, we are left with showing that \eqref{eq:harman}
  also holds. For this purpose we again invoke Fourier series. In
  the following, sets $A_{l_1}, A_{l_2}$ will be identified with
  the sets $C_{i_1,j_1}, C_{i_2,j_2}$ respectively when indexed
  via an array. Since we sum over $l_1<l_2$ in condition
  \eqref{eq:harman} we may assume without loss of generality that
  $j_2\geq j_1$. We have that
  \begin{align*}
    \mu^2(A_{l_1}\cap A_{l_2})
    &=\iint
      \1_{B(0,r_{j_1})}(2^{i_1}x-2^{j_1}y)
      \1_{B(0,r_{j_2})}(2^{i_2}x-2^{j_2}y)\,dx\,dy\\
    &=\sum_{k_1,k_2\in\Z} c_{j_1,k_1}c_{j_2,k_2}\iint e^{2\pi i
      \left(k_1(2^{i_1}x-2^{j_1}y) +
      k_2(2^{i_2}x-2^{j_2}y)\right)}\,dx\,dy\\
    &=\sum_{k_1,k_2\in\Z} c_{j_1,k_1}c_{j_2,k_2} \int e^{2\pi i
      (k_1 2^{i_1}+k_2 2^{i_2})x}\,dx\int e^{-2\pi i (k_1
      2^{j_1}+k_2 2^{j_2})y}\,dy.
  \end{align*}
  We see that the integrals are only non-zero if
  \begin{equation*}
    \begin{cases}
      k_1 2^{i_1}+k_2 2^{i_2} = 0\\
      k_1 2^{j_1}+k_2 2^{j_2}= 0
    \end{cases}
    \Rightarrow\quad
    \begin{cases}
      k_1=-k_2 2^{i_2-i_1}\\
      k_1=-k_2 2^{j_2-j_1}.
    \end{cases}
  \end{equation*}
  The last two equations can only be true if $p:=j_2-j_1=i_2-i_1$
  or $k_1=k_2=0$. Suppose that $j_2-j_1=i_2-i_1$. In this case
  the sum over $k_1\in\Z$ can be replaced by summing over the
  values $-k_2 2^{p}$. Furthermore, the central coefficients
  satisfy $c_{j_1,0}c_{j_2,0}=\mu(A_{l_1})\mu(A_{l_2})$ which we
  may subtract
  to the left hand side to obtain
  \begin{align*}
    \mu^2(A_{l_1}\cap
    A_{l_2})-\mu^2(A_{l_1})\mu^2(A_{l_2})
    &=\sum_{k_2\in\Z\backslash\{0\}}
      c_{j_1,-k_2
      2^{p}}c_{j_2,k_2}\\ 
    &\leq \sum_{k_2\in\Z\backslash\{0\}} |c_{j_1,-k_2
      2^{p}}c_{j_2,k_2}|.
  \end{align*} 
  We have two upper bounds for the coefficients, namely the
  inverse of the index over which the coefficient is summed as
  well as the integral of the function that the Fourier series
  represents. More precisely,
  \begin{equation*}
    |c_{j_1,-k_2 2^{p}}c_{j_2,k_2}|\leq
    \begin{cases}
      \frac{1}{|k_2| 2^p}\frac{1}{|k_2|}=\frac{1}{k_2^2 2^p}\\
      \int\1_{B(0,r_{j_1})}\,d\mu^2
      \int\1_{B(0,r_{j_2})}\,d\mu^2=4r_{j_1}r_{j_2} 
    \end{cases}.
  \end{equation*} 
  To optimize our upper bound we compute
  \begin{equation*}
    4r_{j_1}r_{j_2}\leq \frac{1}{k_2^2 2^p} \enskip
    \Rightarrow\enskip k_2\leq
    (4r_{j_1}r_{j_2})^{-\frac12}2^{-\frac{p}{2}}
  \end{equation*}
  and split up our sum accordingly, that is, for
  $S_{j_1,j_2}:=(4r_{j_1}r_{j_2})^{-\frac12}2^{-\frac{p}{2}}$  we
  have
  \begin{align*}
    \sum_{k_2\in\Z\backslash\{0\}} |c_{j_1,-k_2 2^{p}}c_{j_2,k_2}|
    &=\sum_{k_2=1}^{\lfloor S_{j_1,j_2} \rfloor } |c_{j_1,-k_2
      2^{p}}c_{j_2,k_2}| + \sum_{k_2=S_{j_1,j_2}}^{\infty}
      |c_{j_1,-k_2 2^{p}}c_{j_2,k_2}|\\ 
    &\leq 2 \left (\sum_{k_2=1}^{\lfloor S_{j_1,j_2} \rfloor }
      4r_{j_1}r_{j_2} + \sum_{k_2=S_{j_1,j_2}}^{\infty}
      \frac{1}{k_2^2 2^p}\right )\\ 
    &\leq
      8r_{j_1}r_{j_2}(4r_{j_1}r_{j_2})^{-\frac12}2^{-\frac{p}{2}}+\frac{2}{S_{j_1,j_2}
      2^p}  \\ 
    &\ll (r_{j_1}r_{j_2})^{\frac12}2^{-\frac{p}{2}}. 
  \end{align*}
  In total we conclude that 
  \begin{equation*}
    \mu^2(A_{l_1}\cap A_{l_2})-\mu^2(A_{l_1})\mu^2(A_{l_2})
    \begin{cases}
      = 0 & \text{ if } j_2-j_1\neq i_2-i_1,\\
      \ll (r_{j_1}r_{j_2})^{\frac12}2^{-\frac{j_2-j_1}{2}} & \text{ if } j_2-j_1= i_2-i_1.
    \end{cases}
  \end{equation*}
  With this estimate in hand we are ready to prove property
  \eqref{eq:harman} of Lemma~\ref{Chung-Erdös}. We have
  \begin{align*}
    \displaystyle \sum_{1 \leq l_1 <
    l_2 \leq N} \bigl(\mu^2(A_{l_1} \cap A_{l_2}) - \mu^2(A_{l_1})\mu^2
    (A_{l_2}) \bigr) &\ll  \displaystyle \sum_{1 \leq l_1 <
                       l_2 \leq N} (r_{j_1}r_{j_2})^{\frac12}2^{-\frac{j_2-j_1}{2}}\\
                     &\leq \displaystyle \sum_{1 \leq l_1 \leq
                       l_2 \leq N} r_{j_1} 2^{-\frac{j_2-j_1}{2}}				
  \end{align*}
  since $r_j\geq r_{j+1}$ by assumption. Recall that the sets
  $A_{l_1}, A_{l_2}$ correspond to the sets
  $C_{i_1,j_1}, C_{i_2,j_2}$, hence we need to rewrite the sum
  over $1\leq l_1\leq l_2\leq N$ as a sum over the appropriate
  index set of the $i_1,i_2,j_1,j_2$. Since the condition in
  \eqref{eq:harman} is given via the $\liminf$, it is sufficient
  if we can prove that the fraction vanishes along a
  subsequence. We will do this for the subsequence $N=k_n$ where
  $k_n=\sum_{i=1}^n i$. Define the sets
  \begin{align*}
    I_n&:=\left \{ (i_1,i_2,j_1,j_2)\in \N^4:
         \begin{array}{cc}
           i_1\leq j_1\leq n   &  j_2-j_1=i_2-i_1 \\  i_2\leq
           j_2\leq n &  j_1\leq j_2
         \end{array} \right \}.
  \end{align*}
  So we have
  \begin{equation*}
    \displaystyle \sum_{1 \leq l_1 \leq
      l_2 \leq N} r_{j_1} 2^{-\frac{j_2-j_1}{2}}= \displaystyle
    \sum_{I_n} r_{j_1} 2^{-\frac{j_2-j_1}{2}}.
  \end{equation*}
  First we note that the sum on the right hand side is
  independent of $i_1, i_2$. For fixed $j_1\leq j_2\leq n$ there
  are exactly $j_1$ pairs $(i_1,i_2)$ such that
  $(i_1,i_2,j_1,j_2)\in I_n$, namely
  \begin{equation*}
    \begin{array}{c}
      (1,1+(j_2-j_1)) \\ (2,2+(j_2-j_1)) \\ \vdots \\ (j_1,j_1+(j_2-j_1)).
    \end{array}
  \end{equation*} 
  Hence we may write 
  \begin{align*}
    \displaystyle \sum_{I_n} r_{j_1} 2^{-\frac{j_2-j_1}{2}}
    &=\displaystyle \sum_{1\leq j_1\leq j_2\leq n}
      j_1 r_{j_1} 2^{-\frac{j_2-j_1}{2}}=\sum_{j_1=1}^{n} j_1
      r_{j_1} \sum_{j_2=j_1}^{n} 2^{-\frac{j_2-j_1}{2}}\\
    &\leq  \sum_{j_1=1}^{n} j_1 r_{j_1} \sum_{m=0}^{\infty}
      2^{-\frac{m}{2}}=C_1 \sum_{j_1=1}^{n} j_1 r_{j_1}.
  \end{align*}
  This gives \eqref{eq:harman} since then
  \begin{align*}
    \lim_{n \to \infty} \frac{ \displaystyle \sum_{1 \leq l_1 <
    l_2 \leq k_n} \bigl(\mu^2(A_{l_1} \cap A_{l_2}) -\mu^2(A_{l_1})\mu^2
    (A_{l_2}) \bigr)}{\displaystyle \Biggl( \sum_{l=1}^N \mu^2 (A_l)
    \Biggr)^2} &\leq \lim_{n \to \infty} \frac{C_1 \sum_{j=1}^{n} j
                 r_{j}}{(\sum_{j=1}^{n} j r_{j})^2}\\ 
               &= \lim_{n \to \infty}\frac{C_1 }{\sum_{j=1}^{n} j r_{j}}=0
  \end{align*}
  since $\sum_{j=1}^{n} j r_{j}\to\infty$ for $n\to\infty$ by
  assumption. We conclude that under this assumption
  $\mu^2(\limsup_{n} E_n)=1$.
\end{proof}

\begin{proof}[Proof of Theorem~\ref{the:2xliminf}] Part
  (\ref{2xliminf0}) is an immediate consequence of
  Theorem~\ref{the:measurezero}(\ref{infzero}).
	
  Proof of part (\ref{2xliminf1}).
  For a decreasing sequence $(r_n)_n$, we again let
  \begin{equation*}
    \hat{S}_n(x,y)=\sum_{i,j\in [0, 2^n)}  \1_{B(T^j
      y,r_{2^{n+1}})}(T^i x).
  \end{equation*}
  The structure of the proof is the same as that of the proof of
  Theorem~\ref{thm:supinf2}(\ref{thm:supinf2-1}). As
  explained in detail there, the result follows if we can prove
  that
  \begin{equation*}
    \sum_{n=1}^{\infty}
    \frac{\E(\hat{S}_n^2)-\E(\hat{S}_n)^2}{\E(\hat{S}_n)^2}<\infty.
  \end{equation*}
  To this end, we first compute the expectation of $\hat{S}_n$. By $T$-invariance of $\mu$ we get
  \begin{align}\label{eq:liminfexpt}
    \E(\hat{S}_n)
           &=\sum_{i,j\in [0, 2^n)} \int \mu(B(T^j
             y,r_{2^{n+1}})) \,d\mu(y)\nonumber\\ 
           &=\sum_{i,j\in [0, 2^n)} 2r_{2^{n+1}}=2^{2n+1} r_{2^{n+1}}.
  \end{align} 
  Note that condition \eqref{eq:liminf1} implies $\E(\hat{S}_n)\to \infty $.
  We are left with computing $\E(\hat{S}_n^2)$. We have
  \begin{align*}
    \hat{S}_n(x,y)^2
    &=\sum_{i_1,i_2,j_1,j_2\in [0,2^n)}
      \1_{B(T^{j_1}y,r_{2^{n+1}})}(T^{i_1}
      x)\1_{B(T^{j_2}y,r_{2^{n+1}})}(T^{i_2} x)\\ 
    &=\sum_{i_1,i_2,j_1,j_2\in [0,2^n)}
      \1_{B(0,r_{2^{n+1}})}(T^{i_1}
      x-T^{j_1}y)\1_{B(0,r_{2^{n+1}})}(T^{i_2} x-T^{j_2}y)\\ 
    &=\sum_{i_1,i_2,j_1,j_2\in [0,2^n)}
      \1_{B(0,r_{2^{n+1}})}(2^{i_1}
      x-2^{j_1}y)\1_{B(0,r_{2^{n+1}})}(2^{i_2} x-2^{j_2}y). 
  \end{align*}
  Taking the expectation then gives
  \begin{equation*}
    \E(\hat{S}_n^2)=\sum_{\substack{i_1,i_2\in [0,2^n) \\
        j_1,j_2\in [0,2^n)}} \iint
    \1_{B(0,r_{2^{n+1}})}(2^{i_1}
    x-2^{j_1}y)\1_{B(0,r_{2^{n+1}})}(2^{i_2} x-2^{j_2}y)\,dx\,dy.
  \end{equation*}
  We now proceed as in the proof of Theorem~\ref{thm:2xmod1Dicho}
  by writing $\1_{B(0,r_{2^{n+1}})}$ via its Fourier series. For given $i_1,i_2,j_1,j_2\in [1,2^n]$ we obtain 
  \begin{align*}
    \iint& \1_{B(0,r_{2^{n+1}})}(2^{i_1} x-2^{j_1}y)\1_{B(0,r_{2^{n+1}})}(2^{i_2} x-2^{j_2}y)\,dx\,dy\\
         &=\sum_{k_1,k_2\in\Z} c_{j_1,k_1}c_{j_2,k_2}\iint e^{2\pi i \bigl(k_1(2^{i_1} x-2^{j_1}y)+k_2(2^{i_2} x-2^{j_2}y)\bigr)}\,dx\,dy\\
         &= \sum_{k_1,k_2\in\Z} c_{j_1,k_1}c_{j_2,k_2}\int
           e^{2\pi i (k_1 2^{i_1} + k_2 2^{i_2})x}\,dx \int
           e^{2\pi i (k_1 2^{j_1}+k_2 2^{j_2})y}\,dy.
             \end{align*}
  As in the proof of Theorem~\ref{thm:2xmod1Dicho} we see that
  the integrals are only non-zero if
  \begin{equation*}
    \begin{cases}
      k_1 2^{i_1}+k_2 2^{i_2} = 0\\
      k_1 2^{j_1}+k_2 2^{j_2}= 0
    \end{cases}
    \Rightarrow\quad
    \begin{cases}
      k_1=-k_2 2^{i_2-i_1}\\
      k_1=-k_2 2^{j_2-j_1},
    \end{cases}
  \end{equation*}
  that is, when $k_1=k_2=0$ or when $p:=j_2-j_1=i_2-i_1$. Suppose
  $j_2-j_1=i_2-i_1$. In this proof we need to consider the two
  cases $p\geq 0$ and $p< 0$. For $p\geq 0$ we use the equation
  above for $k_1$, i.e.\ we sum over $k_1=-k_2 2^p$. For $p<0$ we
  instead replace the sum over $k_2$ by
  $k_2=-k_1 2^{-p}=-k_1 2^{|p|}$. However, referring again to
  the proof of Theorem~\ref{thm:2xmod1Dicho}, we see that the
  final estimate of the Fourier coefficients does not depend on
  $k_1$ or $k_2$, hence we can use the same bound. In total we
  get
  \begin{align*}
    \sum_{k_1,k_2\in\Z} c_{j_1,k_1}c_{j_2,k_2}
    &\int e^{2\pi i (k_1 2^{i_1} + k_2 2^{i_2})x}\,dx \int e^{2\pi i (k_1 2^{j_1}+k_2 2^{j_2})y}\,dy\\
    &\leq c_{j_1,0}c_{j_2,0}+\begin{cases}0 & \text{ if } j_2-j_1\neq i_2-i_1,\\
      r_{2^{n+1}}2^{-\frac{|j_2-j_1|}{2}} & \text{ if } j_2-j_1= i_2-i_1.
    \end{cases}
  \end{align*}
  Define the index set
  \begin{equation*}
    \tilde{I}_n:=\left \{ (i_1,i_2,j_1,j_2)\in \N^4:
      i_1,i_2, j_1,j_2\leq n  \text{ , }   j_2-j_1=i_2-i_1 \right \}.
  \end{equation*}
  The desired expectation can now be estimated through the
  expression
  \begin{align*}
    \E(\hat{S}_n)^2&\leq\sum_{i_1,i_2,j_1,j_2\in [0,2^n)}
                    4r_{2^{n+1}}^2+\sum_{i_1,i_2,j_1,j_2 \in
                    \tilde{I}_{2^n}}r_{2^{n+1}}
                    2^{-\frac{|j_2-j_1|}{2}}\\ 
                  &=2^{4n+2}r_{2^{n+1}}^2+ r_{2^{n+1}}
                    \sum_{i_1,i_2,j_1,j_2 \in
                    \tilde{I}_{2^n}}2^{-\frac{|j_2-j_1|}{2}}.
  \end{align*}
  We focus on the remaining sum. Since it does not depend on
  $i_1,i_2$ we consider the number of pairs $(i_1,i_2)$ for a
  given pair $(j_1,j_2)$, for which $ j_2-j_1=
  i_2-i_1$. Certainly this can be bounded from above by
  $2^n$. This means that
  \begin{align*}
    \E(\hat{S}_n)^2&\leq 2^{4n+2}r_{2^{n+1}}^2+ 2^n r_{2^{n+1}}\sum_{j_1,j_2\in[0,2^n)} 2^{-\frac{|j_2-j_1|}{2}}\\
                  &\leq 2^{4n+2}r_{2^{n+1}}^2+ 2^n r_{2^{n+1}} 2\sum_{j_1=0}^{2^n-1}\sum_{j_2=j_1}^{2^n} 2^{-\frac{j_2-j_1}{2}}\\
                  &= 2^{4n+2}r_{2^{n+1}}^2+ 2^n r_{2^{n+1}} 2\sum_{j_1=0}^{2^n-1}\sum_{m=0}^{\infty} 2^{-\frac{m}{2}}\\
                  &\leq 2^{4n+2}r_{2^{n+1}}^2+ 2^n r_{2^{n+1}} 2\sum_{j_1=0}^{2^n-1} C_1\\
                  &\leq 2^{4n+2}r_{2^{n+1}}^2+ C_1 2^{2n+1} r_{2^{n+1}},
  \end{align*}
  for some $C_1 >0$.
  
  Using this estimate along with \eqref{eq:liminfexpt} we can now compute
  \begin{align}\label{eq:propconclusion}
    \frac{\E(\hat{S}_n^2)-\E(\hat{S}_n)^2}{\E(\hat{S}_n)^2}
    &\leq
      \frac{2^{4n+2}r_{2^{n+1}}^2+
      C_1
      2^{2n+1} r_{2^{n+1}}-(2^{2n+1} r_{2^{n+1}})^2}{(2^{2n+1} r_{2^{n+1}})^2}\\
    &=\frac{C_1}{2^{2n+1}r_{2^{n+1}}}.\nonumber
  \end{align}
  Hence 
  \begin{equation*}
    \sum_{n=0}^{\infty}
    \frac{C_1}{2^{2n+1}r_{2^{n+1}}} = C_1
    \sum_{n=1}^{\infty}
    \frac{1}{2^{2n}r_{2^{n}}}<\infty 
  \end{equation*}
  by \eqref{eq:liminf1}. We conclude that $\mu^2(\liminf_{n} E_n)=1$.
\end{proof}

\section{Proofs for Rotations}

Parts (\ref{thm:rotinfzero}) and (\ref{thm:rotsupzero}) of Theorem~\ref{thm:rotation} follow immediately from Theorem~\ref{the:measurezero} exploiting that $\mu_1$ and $\mu_2$ are Lebesgue measure. The proof of parts (\ref{thm:rotinfone}) and (\ref{thm:rotsupone}), given below, is similar to that of Theorem~\ref{thm:supinf2}.

\begin{proof}[Proof of Theorem~\ref{thm:rotation}(\ref{thm:rotinfone})]
Given a decreasing sequence $(r_n)_n$, set again
  \[
    \hat S_n(x, y):= \sum_{i, j\in [0, 2^{n})} \1_{B(T_2^jy,
      r_{2^{n+1}})}(T_1^ix).
  \]
  As argued in Theorem~\ref{thm:supinf2}(\ref{thm:supinf2-1}), it is sufficient to show that
  $\sum_{n=1}^\infty \frac{\E(\hat S_n^2)-\E(\hat S_n)^2}{\E(\hat S_n)^2} <\infty$.  As in the doubling map case, we can write
  $\mathbb{E} (\hat{S}_n) = 2^{2n+1} r_{2^{n+1}}$.

  We begin by estimating $\E(\hat{S}_n^2)$ which we may also write as
  \begin{align*}
    & \sum_{i_1,i_2,j_1,j_2\in [0, 2^{n}) } \iint\1_{B(T_2^{j_1}y,
    r_{2^{n+1}})}(T_1^{i_1}x)\cdot \1_{B(T_2^{j_2}y,
    r_{2^{n+1}})}(T_1^{i_2}x)~d\mu_1(x)d\mu_2(y).
  \end{align*}

  Since only one the systems is mixing we distinguish just
  between two cases, namely the separated case $i_2-i_1>cn$ and
  the non-separated case $i_2-i_1\leq cn$ with
  $c>4\log(2)/\theta$. For the separated case we use exponential
  mixing of $T_1$ with respect to Lebesgue measure $\mu_1$ to
  obtain
  \begin{align*}
    & \sum_{j_1,j_2 \in [0, 2^{n})} \sum_{\substack{i_1, i_2\in [0, 2^{n})\\ i_2-i_1>cn}} \
    \iint\1_{B(T_2^{j_1}y,
    r_{2^{n+1}})}(T_1^{i_1}x)\cdot \1_{B(T_2^{j_2}y,
    r_{2^{n+1}})}(T_1^{i_2}x)~d\mu_1(x)d\mu_2(y)\\
    &\quad \leq \sum_{j_1,j_2 \in [0, 2^{n})}\sum_{\substack{i_1, i_2\in [0, 2^{n})\\
    i_2-i_1>cn}} \
    \int \Bigl(\mu_1(B(T_2^{j_1}y,
    r_{2^{n+1}})\mu_1(B(T_2^{j_2}y,
    r_{2^{n+1}}) + \\
    & \hspace{9cm} +2C \mathrm{e}^{-n\theta c}\Bigr)d\mu_2(y) \\
    & \quad \leq 2^{4n} \cdot \bigl(4r^2_{2^{n+1}}+2C \mathrm{e}^{-n\theta c}\bigr).
  \end{align*}
  To investigate the non-separated case $i_2-i_1\leq cn$ we write 
  \begin{align*}
    & \sum_{j_1,j_2 \in [0, 2^{n})} \sum_{\substack{i_1, i_2\in [0, 2^{n})\\ i_2-i_1\leq cn}} \
    \iint\1_{B(T_2^{j_1}y,
    r_{2^{n+1}})}(T_1^{i_1}x) \1_{B(T_2^{j_2}y,
    r_{2^{n+1}})}(T_1^{i_2}x)~d\mu_1(x)d\mu_2(y)\\
    &\quad = \sum_{\substack{i_1, i_2\in [0, 2^{n})\\ i_2-i_1\leq cn}}\sum_{j_1,j_2\in [0, 2^{n})}\
   \iint\1_{B(T_1^{i_1}x,
    r_{2^{n+1}})}(y) \cdot \\
    & \hspace{6cm} \cdot \1_{B(T_1^{i_2}x,
    r_{2^{n+1}})}(T_2^{j_2-j_1}y)~d\mu_2(y)d\mu_1(x)
  \end{align*}
and we note that it is enough to prove part (\ref{thm:rotinfone}) under the assumption that
  \[
    r_n = \frac{(\log n)^2(\log\log n)^{1+\eps}}{n^2}.
  \]
This will allow the application of the following claim.
  
  \begin{claim}\label{claim:Dioph}
    Suppose $\alpha$ admits a function
    $\Psi_{\alpha} \colon \N\to\R$ such that
    $\|j\alpha\| \geq \Psi_{\alpha}(n)$ for all $j\in [1,n]$ and
    that $r_n = o( \Psi_{\alpha}(n))$.

    Then, for all large enough $n$, if $y$ and $z$ are two given
    points, there is at most one $j\in [1,n]$ such that
    $T_2^j y \in B(z, r_n)$.
  \end{claim}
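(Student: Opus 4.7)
The plan is to argue by contradiction using the translation‑invariance of the circle metric together with the Diophantine hypothesis on $\alpha$. Suppose that for some large $n$ there exist two distinct indices $j_1, j_2 \in [1,n]$ such that both $T_2^{j_1}y$ and $T_2^{j_2}y$ lie in $B(z, r_n)$. By the triangle inequality on the circle,
\[
d(T_2^{j_1}y, T_2^{j_2}y) < 2 r_n.
\]

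Next I would use that $T_2$ is rotation by $\alpha$, so $T_2^{j}y = y + j\alpha \pmod 1$, and therefore the left‑hand side equals $\|(j_2-j_1)\alpha\|$, where $\|\cdot\|$ denotes the distance to the nearest integer. Setting $k := |j_2 - j_1|$, we have $k \in [1, n-1] \subset [1,n]$, so the hypothesis $\|j\alpha\| \geq \Psi_\alpha(n)$ for every $j\in [1,n]$ yields
\[
\|(j_2-j_1)\alpha\| = \|k \alpha\| \geq \Psi_\alpha(n).
\]

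Combining these two inequalities gives $\Psi_\alpha(n) < 2 r_n$, which contradicts the assumption $r_n = o(\Psi_\alpha(n))$ as soon as $n$ is large enough that $2 r_n < \Psi_\alpha(n)$. Hence at most one $j \in [1,n]$ can satisfy $T_2^j y \in B(z, r_n)$.

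There is no serious obstacle here: the argument is essentially a one‑line Diophantine uniqueness statement. The only point to double‑check is the bookkeeping on the range of $k = |j_2-j_1|$, namely that $k$ indeed falls within $[1,n]$ where the assumed lower bound on $\|j\alpha\|$ is valid; this is automatic since $j_1, j_2 \in [1,n]$ implies $k \in [1, n-1]$.
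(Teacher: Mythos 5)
Your argument is correct and is essentially the same as the paper's: both reduce the claim to the observation that two distinct indices in the orbit would force $\|(j_2-j_1)\alpha\| < 2r_n$, which contradicts $\|k\alpha\| \geq \Psi_\alpha(n)$ for $k \in [1,n]$ once $2r_n < \Psi_\alpha(n)$, guaranteed for large $n$ by $r_n = o(\Psi_\alpha(n))$. Your version simply spells out the triangle-inequality and index-range bookkeeping that the paper leaves implicit.
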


\begin{proof}
  The claim follows if we can show for all large enough $n$ that
  it is not possible for distinct $j_1, j_2\in [0, n)$ to have
  $d(T_2^{j_1}y, T_2^{j_2}y) < 2 r_n$.  So it is sufficient to
  show that
  $d(T_2^j0, 0) \ge \Psi_{\alpha}(n)$
  for all $j\in [1,n]$.  Another way of writing this is
  $\|j\alpha\| \ge \Psi_{\alpha}(n)$
  for $j\in [1,n]$, which is guaranteed by the Diophantine assumption
  on $\alpha$.
\end{proof}
Set $\Psi_{\alpha}(n)=c(\alpha) (\log n)^2 (\log\log(n))^{1+2\eps}/ n^{2}$. By the assumptions in part (\ref{thm:rotinfone}), we see that $r_n = o( \Psi_{\alpha}(n))$ and since $\Psi_{\alpha}(n)$ is decreasing also $\|j\alpha\|\geq \Psi_{\alpha}(n)$ for all $j\in [1,n]$. Thus $\Psi_{\alpha}(n)$ satisfies the requirements of the claim.

By the claim, for each each fixed $x$, $y$, $i_1$, $i_2$ and $j_1$ there can be at most one $j_2=j_2(x,y,i_1,i_2,j_1)\in [0, 2^{n})$ such that $\1_{B(T_1^{i_1}x,
    r_{2^{n+1}})}(y)\cdot \1_{B(T_1^{i_2}x,
    r_{2^{n+1}})}(T_2^{j_2-j_1}y)\neq 0$. We then write $\tilde{j}=\tilde{j}(x,y,i_1,i_2,j_1)\coloneqq j_2-j_1$ and our sum from above as
\[
  \sum_{\substack{i_1, i_2\in [0, 2^{n})\\ i_2-i_1\leq cn}}\sum_{j_1\in [0, 2^{n})} \
   \iint\1_{B(T_1^{i_1}x,
    r_{2^{n+1}})}(y)\cdot \1_{B(T_1^{i_2}x,
    r_{2^{n+1}})}(T_2^{\tilde{j}}y)~d\mu_2(y)d\mu_1(x)
\]
noting that $\tilde{j}$ may not exist in which case the integrad above is 0. All together we get
\begin{align*}
    & \sum_{j_1,j_2 \in [0, 2^{n})} \sum_{\substack{i_1, i_2\in [0, 2^{n})\\ i_2-i_1\leq cn}} \
    \iint\1_{B(T_2^{j_1}y,
    r_{2^{n+1}})}(T_1^{i_1}x)\cdot \1_{B(T_2^{j_2}y,
    r_{2^{n+1}})}(T_1^{i_2}x)~d\mu_1(x)d\mu_2(y)\\
    &\quad = \sum_{\substack{i_1, i_2\in [0, 2^{n})\\ i_2-i_1\leq cn}}\sum_{j_1\in [0, 2^{n})}\
   \iint\1_{B(y,
    r_{2^{n+1}})}(T^{i_1}_1x)\cdot \1_{B(T_2^{\tilde{j}}y,
    r_{2^{n+1}})}(T_1^{i_2}x)~d\mu_1(x)d\mu_2(y) \\
 &\quad \leq \sum_{\substack{i_1, i_2\in [0, 2^{n})\\ i_2-i_1\leq cn}}\sum_{j_1\in [0, 2^{n})}\
   \iint\1_{B(y,
    r_{2^{n+1}})}(T^{i_1}_1x)~d\mu_1(x)d\mu_2(y) \\
&\quad = \sum_{\substack{i_1, i_2\in [0, 2^{n})\\ i_2-i_1\leq cn}}\sum_{j_1\in [0, 2^{n})}\
   \int\mu_1(B(y,
    r_{2^{n+1}}))~d\mu_2(y) \leq cn2^{2n}2r_{2^{n+1}}.
  \end{align*}

So if $r_n \ge \frac{(\log n)^2(\log\log n)^{1+\eps}}{n^2}$, then
\begin{equation}\label{eq:RotSuff}
\frac{\E(\hat S_n^2)-\E(\hat S_n)^2}{\E(\hat S_n)^2} \leq \frac{2^{4n+1} C \mathrm{e}^{-n\theta c}+cn2^{2n}2r_{2^{n+1}}}{2^{4n+2}r^2_{2^{n+1}}}
\end{equation}
is summable since $c>4\log(2)/\theta$. 
\end{proof}

\begin{proof}[Proof of Theorem~\ref{thm:rotation}(\ref{thm:rotsupone})]
  Analogously to the proof of Theorem~\ref{thm:supinf2}
  (\ref{thm:supinf2-2}), it is enough to observe
  that~\eqref{eq:RotSuff} with $\hat{S}_n$ replaced by $S_n$ from
  \eqref{eq:S_n} goes to zero under the assumption
  $r_n=\log(n)h(n)/n^2$ with $h(n) \to \infty$. Furthermore, the
  Diophantine condition on $\alpha$ ensures
  $r_n = o( \Psi_{\alpha}(n))$ and, hence, the applicability of
  Claim~\ref{claim:Dioph} in this case.
\end{proof}

\section{Proofs for one orbit}

Suppose $X\subset \R^d$ is compact and $r>0$. Then we can cover $X$ by balls
$\{B(x_p,r)\}_{p = 1}^{k(r)}$ where $k(r) \asymp r^{-d}$ and there exists a $C_0\in\N$ such that each $x\in X$ belongs to at most $C_0$ elements of $\{B(x_p,2r)\}_{p = 1}^{k(r)}$.\footnote{For general metric spaces this property is known as \emph{bounded local complexity.}} 

We will write $\1_{p, r} := \1_{B(x_p, 2r)}$. For a proof of the following easy fact, see the ideas in \cite[Lemma~12]{GouRouSta24}.

\begin{lemma}
	Let $X\subset \R^d$ be compact. For all $x,y\in X$ we have,
	\begin{equation*}
			\1_{B(x,r)}(y) \le \sum_{p=1}^{k(r)}\1_{p,r}(x)\1_{p,r}(y)\le C_0\1_{B(x,4r)}(y).
	\end{equation*}
	Consequently, for a probability measure $\mu$, we have,
	\[
	\int\mu(B(x, r)) \, d\mu(x) \le \sum_{p=1}^{k(r)}\mu(B(x_p, 2r))^2\le C_0 \int\mu(B(x, 4r)) \, d\mu(x).
	\]
	\label{lem:GRS}
\end{lemma}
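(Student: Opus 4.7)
The plan is to prove the pointwise inequality on indicator functions first, and then the integrated version will follow by Fubini. Throughout I shall use only the covering property of $\{B(x_p,r)\}_{p=1}^{k(r)}$ and the local multiplicity bound $C_0$ for $\{B(x_p,2r)\}_{p=1}^{k(r)}$.

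For the lower bound in the pointwise inequality, I would fix $x,y\in X$ with $\1_{B(x,r)}(y)=1$, i.e.\ $d(x,y)<r$. Since the balls $\{B(x_p,r)\}$ cover $X$, there exists some index $p$ with $x\in B(x_p,r)$. By the triangle inequality, $d(y,x_p)\le d(y,x)+d(x,x_p)<r+r=2r$, so $y\in B(x_p,2r)$ as well. Hence $\1_{p,r}(x)\1_{p,r}(y)=1$ for this $p$ and the sum is at least $1$.

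For the upper bound, I would argue contrapositively for the support and then bound the multiplicity. If $\1_{p,r}(x)\1_{p,r}(y)\neq 0$ for some $p$, then both $x,y\in B(x_p,2r)$, which forces $d(x,y)<4r$, i.e.\ $y\in B(x,4r)$; so the sum vanishes whenever $\1_{B(x,4r)}(y)=0$. When $y\in B(x,4r)$, we bound the sum by ignoring the $\1_{p,r}(y)$ factor:
\[
\sum_{p=1}^{k(r)}\1_{p,r}(x)\1_{p,r}(y)\le \sum_{p=1}^{k(r)}\1_{p,r}(x)\le C_0,
\]
where the last inequality is exactly the bounded local complexity hypothesis applied at $x$. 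Combining gives the stated pointwise upper bound.

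For the integrated statement, I would integrate both sides of the pointwise inequality against $d\mu(y)\,d\mu(x)$ and invoke Fubini. The left-hand side becomes $\int\mu(B(x,r))\,d\mu(x)$, the right-hand side becomes $C_0\int\mu(B(x,4r))\,d\mu(x)$, and the middle term separates as
\[
\int\!\!\int\sum_{p=1}^{k(r)}\1_{p,r}(x)\1_{p,r}(y)\,d\mu(x)d\mu(y)=\sum_{p=1}^{k(r)}\Bigl(\int\1_{p,r}(x)\,d\mu(x)\Bigr)^{\!2}=\sum_{p=1}^{k(r)}\mu(B(x_p,2r))^2,
\]
giving the claim. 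There is essentially no obstacle here; the only thing to be careful about is keeping track of the factor of $2$ (for the covering of $y$) versus $4$ (for the triangle inequality in the upper bound), which is exactly why the right-hand side involves $B(x,4r)$ rather than $B(x,2r)$.
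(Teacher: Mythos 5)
Your proof is correct and is exactly the standard argument the paper has in mind (the paper omits the proof, deferring to the ideas of \cite[Lemma~12]{GouRouSta24}): cover radius $r$ for the lower bound via the triangle inequality, multiplicity $C_0$ of the doubled balls plus a $4r$ triangle inequality for the upper bound, then integrate against $\mu\times\mu$ and use Fubini. No gaps.
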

Recall the notation $A (r,n) = \{\, x : d (x,T^n x) < r \,\}$ and
the short return time estimate \eqref{eq:shortreturnestimate},
\begin{equation}
	\label{eq:shortreturnestimate2}
	\mu (A (r,n)) \leq C r^s.
\end{equation}
From this point on, let $X\subset \R$ denote an interval.  A
lemma by Kirsebom, Kunde and Persson
\cite[Lemma~3.1]{KirKunPer23}, implies that under exponential
mixing for BV against $L^1$ observables,
\begin{equation}
	\label{eq:KKPestimate}
	\mu (A(r,n)) \leq \int \mu (B(x,r)) \, \mathrm{d} \mu (x) + C_1
	e^{-\theta n}.
\end{equation}
for some constant $C_1>0$. 

\begin{proof}[Proof of Theorem~\ref{thm:1supinf}(\ref{thm:1orbinfzero})]
  Given a sequence $(r_n)_n$, let
  \[
    Q_n (x) := \sum_{0 \leq i < j < n} \1_{B(T^ix, r_n)} (T^j
    x).
  \]
  As argued in the proof of Theorem~\ref{the:measurezero}(\ref{infzero}), the result follows if $\E(Q_n)\to 0$.  We have
  that
  \[
    \E (Q_n) = \sum_{0 \leq i < j < n} \int \1_{B(T^ix, r_n)}
    (T^j x) \, d \mu (x)=\sum_{0 \leq i < j < n} \mu(A(r_n,j-i)).
  \]
  Using \eqref{eq:KKPestimate} when
  $j-i > 3 \theta^{-1} \log n$, and
  \eqref{eq:shortreturnestimate2} otherwise, we obtain
  \begin{align*}
    \sum_{0 \leq i < j < n} \mu(A(r_n,j-i))
    &=\sum_{\substack{0 \leq i < j < n\\j-i\leq 3\theta^{-1}\log
    n}} \mu(A(r_n,j-i)) + \!\!\! \sum_{\substack{0 \leq i < j < n\\j-i>
    3\theta^{-1}\log n}} \mu(A(r_n,j-i))\\
    &\leq C n( \log n) r_n^{s} + n^2 \int \mu (B(x,r_n)) \, d
      \mu(x) + \frac{C_1}{n}. 
  \end{align*}
  Hence $\E(Q_n) \to 0$ as $n \to \infty$. 
\end{proof}

\begin{proof}[Proof of Theorem~\ref{thm:1supinf}(\ref{thm:1orbsupzero})]
	Given a decreasing sequence $(r_n)_n$, let
	\begin{align*}
	\tilde{Q}_n(x)&:= \sum_{2^n \le j < 2^{n+1}} \sum_{0 \leq i < j} \mathbbm{1}_{B (T^i x, r_{2^n})} (T^j x).\\
	\tilde{Q} (x) &:= \sum_{n=0}^\infty \tilde{Q}_n(x).
	\end{align*}
	As argued in the proof of Theorem~\ref{the:measurezero}(\ref{supzero}), the result follows if $\mathbb{E} (\tilde{Q}) < \infty$.
	
	We have
	\begin{align*}
		\mathbb{E} (\tilde{Q})
		&= \sum_{n=0}^\infty \sum_{2^n \leq j <
			2^{n+1}} \sum_{0 \leq i < j} \int \mathbbm{1}_{B(T^i x,
			r_{2^n})} (T^j x) \, \mathrm{d} \mu \\
		&= \sum_{n=0}^\infty \sum_{2^n \leq j <
			2^{n+1}} \sum_{0 \leq i < j} \mu (A(r_{2^n},j-i)).
	\end{align*}
	As in part (\ref{thm:1orbinfzero}), we combine \eqref{eq:KKPestimate} with \eqref{eq:shortreturnestimate2} in
	order to estimate $\mathbb{E} (\tilde{Q})$. Take $c > 2/\theta$. We split
	the above sum as
	\begin{multline}
		\label{eq:ES-sums}
		\mathbb{E}(\tilde{Q}) \leq \sum_{n = 0}^\infty \sum_{2^n \leq j <
			2^{n+1}}
		\sum_{0 \leq i < j - c \log j} \mu (A(r_{2^n},j-i))\\
		+ \sum_{n = 0}^\infty \sum_{2^n \leq j < 2^{n+1}} \sum_{j - c
			\log j \leq i < j} \mu (A(r_{2^n},j-i)).
	\end{multline}
	For the first part we use \eqref{eq:KKPestimate} and obtain that
	\begin{multline*}
		\sum_{n = 0}^\infty \sum_{2^n \leq j < 2^{n+1}} \sum_{0 \leq
			i < j - c
			\log j} \mu (A(r_{2^n},j-i)) \\
		\leq \sum_{n=0}^\infty \sum_{2^n < j < 2^{n+1}} \sum_{0 \leq
			i < j - c \log j} \biggl( \int \mu (B (x,r_{2^n})) \,
		\mathrm{d} \mu (x) + C_1 e^{-\theta (j-i)} \biggr).
	\end{multline*}
	Then
	\[
	\sum_{n=0}^\infty \sum_{2^n \leq j < 2^{n+1}} \sum_{0 \leq i
		< j - c \log j} \int \mu (B (x,r_{2^n})) \, \mathrm{d} \mu
	(x) \leq \sum_{n=0}^\infty 2^{2n} \int \mu (B (x,r_{2^n})) \,
	\mathrm{d} \mu (x),
	\]
	which, since $n\int \mu(B(x,r_n))\, d\mu(x)$ is assumed decreasing, is finite by Cauchy condensation,
	Lemma~\ref{lem:CauchyC}. For the part containing the remainder
	term $C_1 e^{-\theta(j-i)}$ we estimate that
	\[
	\sum_{n=0}^\infty \sum_{2^n \leq j < 2^{n+1}} \sum_{0 \leq i
		< j - c \log j} C_1 e^{-\theta (j-i)} \leq \sum_{n=0}^\infty
	\sum_{2^n < j < 2^{n+1}} C_1 \frac{1}{j^2} \leq
	\sum_{n=0}^\infty C_1 2^{-n},
	\]
	which is also finite.
	
	We now turn to the second part of \eqref{eq:ES-sums}. Here we
	use \eqref{eq:shortreturnestimate2} to get
	\[
	\sum_{n = 0}^\infty \sum_{2^n \leq j < 2^{n+1}} \sum_{j - c
		\log j \leq i < j} \mu (A(r_{2^n},j-i)) \leq \sum_{n =
		0}^\infty \sum_{2^n \leq j < 2^{n+1}} c (\log j) C
	r_{2^n}^s.
	\]
	Using that $r_n \leq n^{-1/s} (\log n)^{-2/s - \varepsilon}$,
	we see that this is finite as well.
	
	These estimates together show that $\mathbb{E} (\tilde{Q}) < \infty$ and
	hence that $\mu (\limsup_n F_n) = 0$.
\end{proof}

\begin{proof}[Proof of Theorem~\ref{thm:1supinf}(\ref{thm:1orbinfone})]
	Let $\gamma\in (0,1/2)$ and define for a decreasing sequence $(r_n)_n$,
	\[
	\hat{Q}_n(x):=\sum_{\substack{i\in [0, \gamma2^{n})\\ j\in
			[(1-\gamma)2^{n}, 2^n)}}\sum_{p=1}^{k(r_{2^{n+1}})}
	\1_{p, r_{2^{n+1}}}(T^ix)\1_{p, r_{2^{n+1}}}(T^jx).
	\]
	For notational simplicity, set $L_n^\gamma:= [0,
        \gamma2^{n})$,  $R_n^\gamma:= [(1-\gamma)2^{n}, 2^n)$ and
        $\bar{r}:=r_{2^{n+1}}$. Suppose $\hat{Q}_n(x)\geq 1$ for
        some $n\in\N$. This means that there exists $(i,j)\in
        L_n^\gamma\times R_n^\gamma$ and an $x_p$ with $\1_{p,
          \bar{r}}(T^ix)\1_{p, \bar{r}}(T^jx)\geq 1$ which in
        turn implies that $d(T^i x,T^j x)\leq 4\bar{r}$. Since
        $(r_n)_n$ is decreasing, we then also have that
        $\hat{Q}_l(x)\geq 1$ for all $l\in[2^n,2^{n+1}]$. Hence
        if $\hat{Q}_n(x)\geq 1$ for all sufficiently large
        $n$ a.s., then $x\in\liminf_{n}F_{n,4r_n}^T$ a.s. As argued in the proof of Theorem~\ref{thm:supinf2} (\ref{thm:supinf2-1}), this will follow if $\sum_{n=1}^{\infty} \frac{\E(\hat{Q}_n^2)-\E(\hat{Q}_n)^2}{\E(\hat{Q}_n)^2}<\infty$.
	
	First we compute $\E(\hat{Q}_n)$.
	\begin{align*}
		\E(\hat{Q}_n)& = \sum_{i\in L_n^\gamma,  j\in
			R_n^\gamma}\sum_{p=1}^{k(\bar{r})} \int\1_{p,
			\bar{r}}(T^ix)\1_{p, \bar{r}}(T^jx) \,
		d\mu(x) \\  
		& = \sum_{i\in L_n^\gamma,  j\in
			R_n^\gamma}\sum_{p=1}^{k(\bar{r})}\left(\left( \int\1_{p,
			\bar{r}}(x) \, d\mu(x)\right)^2 + \mathrm{Err}(n)\right)\\ 
		&= \gamma^2 2^{2n}
		\bar{r}^{-d} \mathrm{Err}(n) +
		\gamma^22^{2n}\sum_{p=1}^{k(\bar{r})}\mu(B(x_p,
		2\bar{r}))^2.
	\end{align*}
	where $\mathrm{Err}(n)$ denotes the error which we can bound by $C'e^{-2^n(1-2\gamma)\theta'}$ using the exponential mixing estimate. Since $r_n\ge n^{-\beta}$ we see that the first term in the estimate above vanishes. For the second term we get from Lemma~\ref{lem:GRS} that
	\begin{equation}\label{eq:lemma51application}
		\int \mu (B(x,\bar{r})) \, d \mu(x) \leq
		\sum_{p=1}^{k(\bar{r})}\mu(B(x_p, 2\bar{r}))^2,
	\end{equation}
	so by the lower bound  assumption on $\int \mu (B(x,\bar{r})) \, d
	\mu(x)$, we have that
        \[
          2^{2n}\sum_{p=1}^{k(\bar{r})}\mu(B(x_p, 2\bar{r}))^2
          \to \infty \quad \text{as} \quad n \to \infty.
        \]
	
	Now we have 
	\begin{align*}
		\E(\hat{Q}_n^2)&= \sum_{\substack{i_1\in L_n^\gamma\\ j_1\in
				R_n^\gamma}} \sum_{\substack{i_2\in L_n^\gamma\\  j_2\in
				R_n^\gamma}}\sum_{p, q=1}^{k(\bar{r})} \int\1_{p,
			\bar{r}}(T^{i_1}x)\1_{p, \bar{r}}(T^{j_1}x)  \1_{q, \bar{r}}(T^{i_2}x)\1_{q,
			\bar{r}}(T^{j_2}x) \, d\mu(x).
	\end{align*} 
	
	We split the sum as in \cite[Section~4, Lower bound]{Zha24}.  We
	start with the totally separated case, were we suppose that
	$|i_2-i_1|> cn$ and $|j_2-j_1|> cn$, where we will choose $c>0$
	later: it is sufficient to assume that $i_2-i_1> cn$ and
	$j_2-j_1> cn$ since the other combinations lead to the same estimates.
	
	\begin{align*}
		& \sum_{\substack{i_1, i_2\in L_n^\gamma\\
				i_2-i_1>cn}}\sum_{\substack{j_1, j_2\in R_n^\gamma\\ j_2-j_1>
				cn}}\sum_{p,q} \int\1_{p, \bar{r}}(T^{i_1}x)\1_{p, \bar{r}}(T^{j_1}x)
		\1_{q, \bar{r}}(T^{i_2}x)\1_{q, \bar{r}}(T^{j_2}x) \, d\mu(x)\\ 
		& =  \sum_{\substack{i_1, i_2\in L_n^\gamma\\
				i_2-i_1>cn}}\sum_{\substack{j_1, j_2\in R_n^\gamma\\ j_2-j_1>
				cn}}\sum_{p,q} \int\1_{p, \bar{r}}(x) \1_{q, \bar{r}}(T^{i_2-i_1}x)\1_{p,
			\bar{r}}(T^{j_1-i_1}x) \1_{q, \bar{r}}(T^{j_2-i_1}x) \, d\mu(x)\\ 
		&\le \sum_{\substack{i_1, i_2\in L_n^\gamma\\
				i_2-i_1>cn}}\sum_{\substack{j_1, j_2\in R_n^\gamma\\ j_2-j_1>
				cn}}\sum_{p,q} \biggl( \int\1_{p, \bar{r}}(x)\1_{q,
			\bar{r}}(T^{i_2-i_1}x) \, d\mu(x)\int  \1_{p, \bar{r}}(x)\1_{q,
			\bar{r}}(T^{j_2-j_1}x) \, d\mu(x)\\ 
		&\hspace{9cm} + C' e^{-(j_1-i_2)\theta'} \biggr)\\
		&\le \sum_{\substack{i_1, i_2\in L_n^\gamma\\
				i_2-i_1>cn}}
		\sum_{\substack{j_1, j_2\in R_n^\gamma\\ j_2-j_1>cn}}
		\sum_{p,q}\left[ \left(\mu(B(x_p, 2\bar{r}))\mu(B(x_q, 2\bar{r}))+ C'e^{-n\theta' c}\right)^2+
		C' e^{-2^n(1-2\gamma)\theta'}\right]\\
		&\le  \gamma^42^{4n}\sum_{p,q}\left[ \left(\mu(B(x_p,
		2\bar{r}))\mu(B(x_q, 2\bar{r}))+ C'e^{-n\theta' c}\right)^2+  C' e^{-2^n(1-2\gamma)\theta'}\right].
	\end{align*}
	where the third line is by 4-mixing.  Since we assume that
	$r_n\ge n^{-\beta}$ and $k(r)\asymp r^{-d}$, up to constants this
	differs from $\E(\hat{Q}_n)^2$ by at most
	\begin{align*}
		\gamma^42^{4n}\sum_{p,q}(e^{-n\theta'
			c}+e^{-2^n(1-2\gamma)\theta'})
		&\le \gamma^42^{4n}k(\bar{r})^2(e^{-n\theta'
			c}+e^{-2^n(1-2\gamma)\theta'})\\ 
		&\le \gamma^42^{4n}2^{n2\beta d}(e^{-n\theta' c}+e^{-2^n(1-2\gamma)\theta'}).
	\end{align*}
	So if $c>2(2+\beta d)\log 2/\theta'$, then this is summable,
	without any conditions coming from dividing by $\E(\hat{Q}_n)^2$.
	
	Now consider the totally non-separated case, where it is
	sufficient to consider the case $0\le i_2-i_1\le cn$ and
	$0\le j_2-j_1\le cn$. Since any $x\in X$ is in at most $C_0$ of the covering sets $\{B(x_p,2r)\}_{p = 1}^{k(r)}$, we get $\sum_{q=1}^{k(\bar{r})} \1_{q, \bar{r}}(x)\cdot \1_{q, \bar{r}}(y)\le C_0$. Using this we have,
	\begin{align*}
		& \sum_{\substack{i_1, i_2\in L_n^\gamma\\0\le  i_2-i_1\le
				cn}}\sum_{\substack{j_1, j_2\in R_n^\gamma\\0\le j_2-j_1\le
				cn}}\sum_{p,q} \int\1_{p, \bar{r}}(T^{i_1}x)\1_{p, \bar{r}}(T^{j_1}x)
		\1_{q, \bar{r}}(T^{i_2}x)\1_{q, \bar{r}}(T^{j_2}x) \, d\mu(x)\\ 
		& \le C_0 \sum_{\substack{i_1, i_2\in L_n^\gamma\\0\le
				i_2-i_1\le cn}}\sum_{\substack{j_1, j_2\in R_n^\gamma\\0\le
				j_2-j_1\le  cn}}\sum_{p} \int\1_{p, \bar{r}}(T^{i_1}x)\1_{p,
			\bar{r}}(T^{j_1}x) \, d\mu(x)\\ 
		& \le C_0 \sum_{\substack{i_1, i_2\in L_n^\gamma\\0\le
				i_2-i_1\le cn}}\sum_{\substack{j_1, j_2\in R_n^\gamma\\0\le
				j_2-j_1\le  cn}}\sum_{p}( \mu(B(x_p, 2\bar{r}))^2+
		C'e^{-2^n(1-2\gamma)\theta'})\\ 
		&\le C_0\gamma^22^{2n}c^2n^2\sum_{p}( \mu(B(x_p,
		2\bar{r}))^2+ C'e^{-2^n(1-2\gamma)\theta'}) .
	\end{align*}
	So the important term to estimate here is
	$2^{2n}n^2\sum_p\mu(B(x_p, 2\bar{r}))^2\lesssim n^2\E(\hat{Q}_n),$
	which when divided by $\E(\hat{Q}_n)^2$ is $n^2 / \E(\hat{Q}_n)$. By using \eqref{eq:lemma51application} this is seen to be
	summable if we assume
	$\int\mu(B(x, r_n)) \, d\mu(x)\ge \frac{(\log n)^3(\log\log
		n)^{1+\eps}}{n^2}$. Note that this is a weaker requirement than the assumption made in the theorem. 
	
	Now for the half-separated case.  Let us suppose that
	$0\le i_2-i_1\le cn$ and $j_2-j_1 \geq cn$.  Then
	\begin{align*}
		& \sum_{\substack{i_1, i_2\in L_n^\gamma\\0\le  i_2-i_1\le
				cn}}\sum_{\substack{j_1, j_2\in R_n^\gamma\\ j_2-j_1>
				cn}}\sum_{p,q} \int\1_{p, \bar{r}}(T^{i_1}x)\1_{p, \bar{r}}(T^{j_1}x)
		\1_{q, \bar{r}}(T^{i_2}x)\1_{q, \bar{r}}(T^{j_2}x) \, d\mu(x)\\ 
		& = \sum_{\substack{i_1, i_2\in L_n^\gamma\\0\le  i_2-i_1\le
				cn}}\sum_{\substack{j_1, j_2\in R_n^\gamma\\ j_2-j_1>
				cn}}\sum_{p,q} \int\1_{p, \bar{r}}(x) \1_{q, \bar{r}}(T^{i_2-i_1}x)\1_{p,
			\bar{r}}(T^{j_1-i_1}x) \1_{q, \bar{r}}(T^{j_2-i_1}x) \, d\mu(x)\\ 
		&\le \sum_{\substack{i_1, i_2\in L_n^\gamma\\0\le  i_2-i_1\le
				cn}}\sum_{\substack{j_1, j_2\in R_n^\gamma\\ j_2-j_1>
				cn}}\sum_{p,q} \biggl( \int\1_{p, \bar{r}}(x) \1_{q,
			\bar{r}}(T^{i_2-i_1}x) \, d\mu(x)  \\
		&\hspace{5cm} \cdot \int\1_{p, \bar{r}}(x) \1_{q, \bar{r}}(T^{j_2-j_1}x)
		\, d\mu(x) + C' e^{-(j_1-i_2)\theta'} \biggr)\\
		&\le \sum_{\substack{i_1, i_2\in L_n^\gamma\\0\le  i_2-i_1\le
				cn}}\sum_{\substack{j_1, j_2\in R_n^\gamma\\ j_2-j_1>
				cn}}\sum_{p,q} \biggl (\int\1_{p, \bar{r}}(x) \1_{q, \bar{r}}(T^{i_2-i_1}x) \, d\mu(x) \\ 
		& \hspace{3cm} \cdot \left(\mu(B(x_p, 2\bar{r}))\mu(B(x_q, 2\bar{r}))+ C'e^{-n\theta' c}\right)+
		C' e^{-2^n(1-2\gamma)\theta'} \biggr), 
	\end{align*}
	where the third line is by 4-mixing.  Note that our choice of $c$
	again makes the part with the terms $C'e^{-n\theta' c}$ and
	$C' e^{-2^n (1-2\gamma)\theta'}$ summable over $n$.  Now by the
	Cauchy--Schwarz Inequality,
	\begin{align*}
		& \sum_{p,q} \int\1_{p, \bar{r}}(x) \1_{q,
			\bar{r}}(T^{i_2-i_1}x) \, d\mu(x)\mu(B(x_p,
		2\bar{r}))\mu(B(x_q, 2\bar{r}))\\ 
		&= \int\sum_p\mu(B(x_p, 2\bar{r}))\1_{p, \bar{r}}(x)\sum_q
		\mu(B(x_q, 2\bar{r}))\1_{q, \bar{r}}(T^{i_2-i_1}x) \, d\mu(x)\\  
		&\le \biggl( \int \Bigl(\sum_p\mu(B(x_p, 2\bar{r}))\1_{p,
			\bar{r}}(x) \Bigr)^2 \, d\mu(x) \biggr)^{\frac12}  \\
		& \hspace{5cm} \cdot \biggl( \int \Bigl( \sum_q \mu(B(x_q,
		2\bar{r})) \1_{q, \bar{r}}(T^{i_2-i_1}x) \Bigr)^2 \, d\mu(x)
		\biggr)^{\frac12}\\ 
		&=\int \Bigl(\sum_p\mu(B(x_p, 2\bar{r}))\1_{p,
			\bar{r}}(x) \Bigr)^2 \, d\mu(x).
	\end{align*}
	Recall that for each $x\in X$, the sum
        $\sum_p\mu(B(x_p, 2\bar{r}))\1_{p, \bar{r}}(x)$ contains
        at most $C_0$ non-zero terms. For these non-zero terms we
        apply the following consequence of Jensen's inequality
	\[
          (a_1+ \cdots + a_m)^2 \le m(a_1^2 + \cdots + a_m^2),
	\]
	for numbers $a_1, \ldots, a_m\ge 0$ along with the fact
        that $\1_{p, \bar{r}}^2= \1_{p, \bar{r}}$ to get the
        bound
        $C_0\sum_p\mu(B(x_p, 2\bar{r}))^2\1_{p, \bar{r}}(x)$.
        Hence, by superadditivity of convex functions, the above
        sum is bounded by,
	\begin{align*}
	\int C_0\sum_p\mu(B(x_p, 2\bar{r}))^2\1_{p,
          \bar{r}}(x) \, d\mu(x)
          & = C_0\sum_p\mu(B(x_p, 2\bar{r}))^3 \\
          & \le	C_0 \left( \sum_p\mu(B(x_p, 2\bar{r}))^2
            \right)^{\frac32}.
	\end{align*}
	So the entire sum we must estimate is
	\[
	2^{3n}cn \left( \sum_p\mu(B(x_p, 2\bar{r}))^2
	\right)^{\frac32}\lesssim n \E(\hat{Q}_n)^{\frac32},
	\]
	which when divided by $\E(\hat{Q}_n)^2$ can be estimated
        as $n / \E(\hat{Q}_n)^{\frac12}$.  Hence choosing
        $\int\mu(B(x, r_n)) \, d\mu(x)\gtrsim \frac{(\log
          n)^4(\log\log n)^{2+\eps}}{n^2}$, this is bounded, up to constants, by
	\[
	\frac{n}{(n^4(\log n+\log \log 2)^{2+\eps}(\log
		2)^4)^{\frac12}},
	\]
	which is summable.
\end{proof}

\begin{proof}[Proof of Theorem~\ref{thm:1supinf}(\ref{thm:1orbsupone})]  The proof follows the same principle
  as the proof of Theorem~\ref{thm:supinf2}(\ref{thm:supinf2-2}). We define
  \[
    \bar{Q}_n(x):=\sum_{\substack{i\in [0, \gamma n)\\ j\in
        [(1-\gamma) n, n)}}\sum_{p=1}^{k(r_n)} \1_{p,
      r_n}(T^ix)\1_{p, r_n}(T^jx).
  \]
  and observe that it is sufficient to prove that
  $\frac{\E(\bar{Q}_n^2)-\E(\bar{Q}_n)^2}{\E(\bar{Q}_n)^2}$ goes
  to 0. Again, all estimates and computations of part
  (\ref{thm:1orbinfone}) are repeated with $2^n$ replaced by $n$
  and the separation gap $cn$ replaced by $c\log n$. Using the
  corresponding assumptions on $r_n$ imposed in part
  (\ref{thm:1orbsupone}) of the theorem one easily reaches the
  conclusion that the aforementioned quantity vanishes.
\end{proof}

\begin{remark}
  In part (\ref{thm:1orbinfone}) and (\ref{thm:1orbsupone}) the
  assumptions on $(r_n)_n$ have some flexibility in the following
  sense. The condition that $r_n\ge n^{-\beta}$ can be relaxed to
  $(r_n)_n$ decreasing at most subexponentially at the cost of
  increasing the power of $\log n$ by an arbitrarily small amount
  in the lower bound on the shrinking rate of
  $\int\mu(B(x, r_n)) \, d\mu(x)$. In the proof, this would be
  reflected by replacing our time gap $cn$ with $n^{1+\iota}$ for
  $\iota>0$ in part (\ref{thm:1orbinfone}) and $c\log n$ by
  $(\log n)^{1+\iota}$ for $\iota>0$.
\end{remark}

\end{document}